\documentclass[a4paper,12pt,reqno]{amsart}

\usepackage[T1]{fontenc}
\usepackage[utf8]{inputenc} 
\usepackage[english]{babel}
\usepackage{enumerate}

\voffset=-1.5cm \textheight=23cm \hoffset=-.5cm \textwidth=16cm
\oddsidemargin=1cm \evensidemargin=-.1cm
\footskip=35pt \linespread{1.25}
\parindent=20pt

\usepackage{amssymb}

\numberwithin{equation}{section}

\usepackage{tikz}
\usetikzlibrary{arrows}
\pgfarrowsdeclare{<<<}{>>>}
{
\arrowsize=0.2pt
\advance\arrowsize by .5\pgflinewidth
\pgfarrowsleftextend{-4\arrowsize-.5\pgflinewidth}
\pgfarrowsrightextend{.5\pgflinewidth}
}
{
\arrowsize=0.2pt
\advance\arrowsize by .5\pgflinewidth
\pgfpathmoveto{\pgfpointorigin}
\pgfpathlineto{\pgfpoint{-2mm}{-1.3mm}}
\pgfusepathqstroke
\pgfpathmoveto{\pgfpointorigin}
\pgfpathlineto{\pgfpoint{-2mm}{1.3mm}}
\pgfusepathqstroke
}

\pgfarrowsdeclare{<<}{>>}
{
\arrowsize=0.2pt
\advance\arrowsize by .5\pgflinewidth
\pgfarrowsleftextend{-4\arrowsize-.5\pgflinewidth}
\pgfarrowsrightextend{.5\pgflinewidth}
}
{
\arrowsize=0.2pt
\advance\arrowsize by .5\pgflinewidth
\pgfpathmoveto{\pgfpointorigin}
\pgfpathlineto{\pgfpoint{-1.2mm}{-.8mm}}
\pgfusepathqstroke
\pgfpathmoveto{\pgfpointorigin}
\pgfpathlineto{\pgfpoint{-1.2mm}{.8mm}}
\pgfusepathqstroke
}

\usepackage{caption}
\captionsetup[figure]{labelfont={bf,small},font=small}

\def\N{\mathbb{N}}
\def\R{\mathbb{R}}
\def\S{\mathbb{S}}
\def\P{\mathbb{P}}
\def\Ceeb{C_{\rm per}}
\def\Cgr{C_{\rm vol}}
\def\proofof#1{\begin{proof}[Proof of #1]}
\def\angle#1#2#3{#1\widehat{#2}#3}
\def\comp{\subset\subset}
\def\eps{\varepsilon}
\def\doppio#1#2{\stackrel{\hbox{\tiny $#1$}}{\hbox{\tiny $#2$}}}
\def\step#1#2{\par\vspace{5pt}\noindent{\underline{\it Step~#1.}}\emph{ #2}\\}

\def\E{\mathcal{E}}
\def\haus{\mathcal{H}}
\def\F{\mathcal F}

\theoremstyle{plain}
\newtheorem{thm}{Theorem}[section]
\newtheorem{lem}[thm]{Lemma}
\newtheorem{prop}[thm]{Proposition}
\newtheorem{cor}[thm]{Corollary}
\newtheorem{defn}[thm]{Definition}
\newtheorem{rmk}[thm]{Remark}

\usepackage[textsize=tiny]{todonotes}
\setlength{\marginparwidth}{1.5cm}

\allowdisplaybreaks


\usepackage{hyperref}
\usepackage{bookmark}

\title[Steiner property for planar clusters. Isotropic case.]{On the Steiner property for planar minimizing clusters. The isotropic case.}

\author[V. Franceschi]{Valentina Franceschi$^*$}
\address{$^*$Department of Mathematics, University of Padova}
\email{valentina.franceschi@unipd.it}

\author[A. Pratelli]{Aldo Pratelli$^\flat$}
\address{$^\flat$Department of Mathematics, University of Pisa}
\email{aldo.pratelli@unipi.it}

\author[G. Stefani]{Giorgio Stefani$^\sharp$}
\address{$^\sharp$Scuola Internazionale Superiore di Studi Avanzati Trieste}
\email{gstefani@sissa.it}


\begin{document}

\begin{abstract}
We consider the isoperimetric problem for clusters in the plane with a double density, that is, perimeter and volume depend on two weights. In this paper we consider the isotropic case, in the parallel paper~\cite{FPS2} the anisotropic case is studied. Here we prove that, in a wide generality, minimal clusters enjoy the ``Steiner property'', which means that the boundaries are made by ${\rm C}^{1,\gamma}$ regular arcs, meeting in finitely many triple points with the $120^\circ$ property.
\end{abstract}

\maketitle

\section{Introduction}

In this paper we consider the isoperimetric problem with double density for planar clusters. This means that we are given two l.s.c. functions $g,\,h:\R^2\to \R^+$ (the \emph{densities}), and the volume and perimeter of any set $E\subseteq\R^2$ of locally finite perimeter are given by
\begin{align}\label{weightedvolper}
|E| = \int_E g(x)\,dx\,, && P(E) = \int_{\partial^* E} h(x)\, d\haus^1(x)\,,
\end{align}
where $\partial^* E$ denotes the reduced boundary as usual in this context (see~\cite{AFP} for definitions and properties of sets of finite perimeter). The \emph{isoperimetric problem} consists then, as always, in the search of sets of given (weighted) volume which minimize the (weighted) perimeter. Of course, depending on $g$ and $h$, different situations may occur. This generalization of the standard isoperimetric problem has gained a rapidly increasing interest in the last decades due to the work of several authors, see for instance~\cite{RCBM08,CMV10,CRS12,MP13,BBCLT16,ABCMP17,DFP,G18,ABCMP19,CGPRS20,FP20} and the references therein.\par

The \emph{isoperimetric problem for clusters}, instead, consists in minimizing the total pe\-ri\-me\-ter of a union of sets with given volume. More precisely, for a given $m\in \N$, an \emph{$m$-cluster} is a collection $\E=\{E_1,\,E_2,\, \dots\, ,\, E_m\}$ of $m$ essentially disjoint sets of locally finite perimeter. For brevity, we will denote $E_0=\R^2\setminus (\cup_{i=1}^m E_i)$. The \emph{volume} of a cluster $\E$ is the vector $|\E| = (|E_1|,\, |E_2|,\, \dots\,,\, |E_m|)\in (\R^+)^m$, while its \emph{perimeter} is
\[
P(E) = \frac{P(\cup_{i=1}^m E_i) + \sum_{i=1}^m P(E_i) }2 
= \frac{\sum_{i=0}^m P(E_i) }2 
= \int_{\partial^*\E} h(x)\, d\haus^1(x)\,,
\]
where the ``reduced boundary'' $\partial^* \E$ of a cluster is defined as the union of the reduced boundaries, that is,
\[
\partial^*\E = \cup_{i=1}^m \partial^* E_i\,.
\]
A cluster which minimizes the perimeter among all those with fixed volume is usually called \emph{minimal cluster}. The isoperimetric problem for clusters has been deeply studied in the last decades. In particular, for the Euclidean case, corresponding to $g\equiv h\equiv 1$, the problem is often referred to as the ``double bubble problem'' in the case $m=2$, which was completely solved in~\cite{HMRR,RHYS03,Rei}. 
Also the ``triple bubble'' and the ``quadruple bubble'', corresponding to $m=3$ and $m=4$, have been studied, see~\cite{W} and~\cite{PT0,PT1,PT2} respectively.\par

The aim of this paper is to prove that minimal clusters enjoy the ``Steiner property'', that is, their boundaries are composed by finitely many ${\rm C}^{1,\gamma}$ arcs, which meet in finitely many triple points with the $120^\circ$ property, see Definition~\ref{SteinerProperty} below. While this property is widely known for the Euclidean case, see for instance the classical papers~\cite{Tay,M94} or the recent book~\cite{Mag}, we generalize it to a much more general case. Similar questions are considered also in~\cite{M94} where the author studies existence and regularity results for minimal clusters in compact Riemannian surfaces. In particular, we will prove the validity of the Steiner property as soon as an $\eps-\eps^\beta$ property and a volume growth condition hold. 

\begin{defn}[$\eta$-growth condition and $\eps-\eps^\beta$ property for clusters]\label{defge}
Given a power $\eta\ge 1$, an \emph{$\eta$-growth condition} is said to hold if there exist a large constant $\Cgr$ and a small constant $R_\eta$ such that, for every $x\in\R^2$ and every $r<R_\eta$, the ball $B(x,r)$ has volume $|B(x,r)|\leq \Cgr r^\eta$. We say that the \emph{local $\eta$-growth condition} holds if for any bounded domain $D\comp \R^2$ there exist two constants $\Cgr$ and $R_\eta$ such that the above property holds for balls $B(x,r)\subseteq D$.\par
Moreover, we say that a cluster $\E$ satisfies the $\eps-\eps^\beta$ property for some $0<\beta\leq 1$ if there exist two small constants $\bar\eps,\,R_\beta$ and a large constant $\Ceeb$ such that, for every vector $\eps\in \R^m$ with $|\eps|\leq \bar\eps$ and every $x\in\R^2$, there exists another cluster $\F$ such that
\begin{align}\label{propeeb}
\F\Delta \E \comp \R^2\setminus B(x,R_\beta)\,, &&
|\F|=|\E|+\eps\,, &&
P(\F)\leq P(\E) + \Ceeb |\eps|^\beta\,.
\end{align}
In this case, for every $0<t\leq \bar\eps$ we call $\Ceeb[t]$ the smallest constant such that the above property is true whenever $|\eps|\leq t$. The map $t\mapsto \Ceeb[t]$ is clearly increasing and $\Ceeb[\bar\eps]\leq \Ceeb$.
\end{defn}

We underline that both the above assumptions are satisfied for a wide class of densities and clusters. In particular, the growth (or local growth) condition clearly holds with $\eta=2$ whenever the density $g$ is bounded (or locally bounded). Concerning the $\eps-\eps^\beta$ property, this is a crucial tool when dealing with isoperimetric problems. It is simple to observe that it is valid with $\beta=1$ for every cluster of locally finite perimeter whenever the density $h$ is regular enough (at least Lipschitz). It is also known that, if $h$ is $\alpha$-H\"older for some $0<\alpha\leq 1$, then every cluster of locally finite perimeter satisfies the $\eps-\eps^\beta$ property with
\[
\beta=\frac 1{2-\alpha}\,,
\]
the proof can be found in~\cite{CP} for the special case $m=1,\, g=h$, in~\cite{PS19} for the more general case $m=1$, and in~\cite{PS2} for the fully general case. 
Relaxing even further the continuity assumptions to consider the ``degenerate'' case $\alpha=0$, the $\eps-\eps^{1/2}$ property holds as soon as $h$ is locally bounded away from $0$ and $+\infty$ (that is, in every compact set one has $1/M\leq h\leq M$ for a suitable $M$). If in addition $h$ is continuous, instead, not only the $\eps-\eps^{1/2}$ property holds, but also $\lim_{t\to 0} \Ceeb[t]=0$. We can be even more precise, that is, $\Ceeb[t]\lesssim \sqrt{\omega_h(\sqrt t)}$, where $\omega_h$ is the modulus of continuity of $h$ (see~\cite{PS2}).\par

We can now give the formal definitions of the Steiner property, already described above, and of the Dini property.

\begin{defn}[Steiner property]\label{SteinerProperty}
A cluster $\E$ is said to satisfy the \emph{Steiner property} if $\partial^* \E$ is a locally finite union of ${\rm C}^1$ arcs, and at each junction point exactly three arcs are meeting, having tangent vectors which form three angles of $\frac 23\,\pi$.
\end{defn}

\begin{defn}[Dini property]
Let $\varphi:\R^+\to\R^+$ be an increasing function such that $\varphi(0)=0$. We say that \emph{$\varphi$ satisfies the Dini property} if for every $C>1$ one has
\[
\sum_{n\in\N} \varphi(C^{-n}) < +\infty\,,
\]
and we say that \emph{$\varphi$ satisfies the $1/2$-Dini property} if $\sqrt{\varphi}$ satisfies the Dini property. A uniformly continuous function $f:\R^N\to\R^+$ is Dini continuous if and only if its modulus of continuity $\omega_f$ satisfies the Dini property. We say that a function $f:\R^N\to\R$ is \emph{locally Dini continuous} if for every bounded set $\Omega\comp\R^N$ the modulus of continuity of $f$ within $\Omega$ satisfies the Dini property. Moreover, we say that \emph{$f$ is $1/2$-Dini continuous} if $\omega_f$ satisfies the $1/2$-Dini property, and that \emph{$f$ is locally $1/2$-Dini continuous} if the $1/2$-Dini property is satisfied by the modulus of continuity of $f$ in every bounded subset of $\R^N$.
\end{defn}

Notice that if $\varphi:\R^+\to\R^+$ is non-decreasing, such that $\varphi(0)=0$ and $\alpha$-H\"older, then it satisfies the Dini property. We are now in position to state the main result of the present paper.

\begin{thm}[Steiner regularity for minimal clusters]\label{main}
Let $h$ be locally $1/2$-Dini continuous, let $\E$ be a minimal cluster, and let us assume that for some $\eta,\,\beta$ the local $\eta$-growth condition holds, as well as the $\eps-\eps^\beta$ property for $\E$. Assume in addition that either
\begin{enumerate}[(i)]
\item $\eta \beta>1$, or
\item $\eta\beta=1$ and the function $t\mapsto \Ceeb[t]$ satisfies the $1/2$-Dini property.
\end{enumerate}
Then $\E$ satisfies the Steiner property, and if $\eta\beta>1$ and $h$ is locally $\alpha$-H\"older then the arcs of $\partial^*\E$ are actually ${\rm C}^{1,\gamma}$ with $\gamma=\frac 12\, \min\left\{\eta\beta-1,\, \alpha\right\}$.
\end{thm}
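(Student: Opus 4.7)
The plan is to prove the Steiner property by a local analysis of $\partial^*\E$, following the classical Almgren--Taylor--Morgan regularity scheme adapted to the weighted setting. I would work at each point $x\in\partial^*\E$ separately and distinguish ``regular'' points, where a single arc passes, from ``singular'' junction points, ultimately showing that junctions are isolated and carry exactly three arcs meeting at $120^\circ$.

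I would begin by establishing uniform perimeter density estimates of the form $cr\leq P(\E;B(x,r))\leq Cr$ for every $x\in\partial^*\E$ and $r$ small. The upper bound is obtained by comparing $\E$ with a competitor that ``empties'' $B(x,r)$ (or rearranges its intersection into a single chamber): the growth condition gives a volume defect of order $r^\eta$, which the $\eps$-$\eps^\beta$ property restores at perimeter cost $O(r^{\eta\beta})$. When $\eta\beta>1$ this is $o(r)$; when $\eta\beta=1$, the $1/2$-Dini property of $\Ceeb[\cdot]$ makes the cumulative cost summable over dyadic scales and the comparison still works. The lower bound follows from the weighted isoperimetric inequality applied to each chamber. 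These bounds force $\partial^*\E$ to be closed, $\H^1$-finite on compact sets, and rectifiable.

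Next I would perform a blow-up analysis. Rescaling $\E$ at $x_0$ by $r\downarrow 0$, continuity of $h$ forces the densities in the limit to be constant, and the rescaled minimal clusters converge to a Euclidean minimal cone. By the classical planar classification (Taylor), the tangent cone is either a half-plane (regular point) or the $120^\circ$ three-ray configuration (singular point). A standard decay-of-excess iteration then upgrades flatness at regular points to $C^{1,\gamma}$ regularity of the arc: at each dyadic scale, comparison with the ``straightened'' configuration costs roughly $\Ceeb[t]\,t^{\eta\beta-1}$ in perimeter to restore the volumes, and the $1/2$-Dini hypotheses on $h$ and on $\Ceeb[\cdot]$ make the accumulated excess contributions summable. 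When $h$ is $\alpha$-H\"older and $\eta\beta>1$, the decay is polynomial and gives the quantitative exponent $\gamma=\frac{1}{2}\min\{\eta\beta-1,\alpha\}$. Finally, the density estimates preclude accumulation of junction points, since a cluster of triple points would force the local perimeter density to exceed the single-arc bound; the arcs are therefore locally finite, and the $120^\circ$ property at each junction follows from the classification of the tangent cone together with a short Steiner reconfiguration argument that rules out any alternative angle pattern.

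The main obstacle is the decay-of-excess step under the weak $1/2$-Dini assumption on $h$. In the Lipschitz or H\"older case a single-scale comparison gives geometric decay, but here the oscillation of $h$ degenerates at small scales, forcing one to sum square roots of moduli of continuity over dyadic scales — which is precisely what the $1/2$-Dini hypothesis is designed to absorb. The analogous $1/2$-Dini requirement on $\Ceeb[\cdot]$ in the critical case $\eta\beta=1$ plays the same role for the cumulative volume-correction cost, ensuring it cannot overwhelm the Euclidean ``straightening gain''. Balancing these competing errors scale by scale is the technical heart of the argument.
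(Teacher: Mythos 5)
Your plan is a genuinely different route from the paper's: you propose the classical blow-up/tangent-cone/excess-decay scheme, whereas the paper never performs a blow-up at all. It works with elementary comparisons: a quantitative Steiner (Fermat point) estimate, the bound $\H^1(\partial^*\E\cap B(x,r))<\tfrac{13}{2}r$, a coarea counting argument showing circles $\partial B(x,\rho)$ meet $\partial^*\E$ in at most $3$ points, a no-islands lemma, a separation result for ``$3$-color'' points, and then ${\rm C}^1$/${\rm C}^{1,\gamma}$ regularity obtained by summing elementary almost-alignment estimates over dyadic scales, the Dini hypotheses making the sums converge. The difficulty with your route is that its two central steps are asserted by appeal to machinery that is not available off-the-shelf in this setting, and this is precisely where the content of the theorem lies. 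To say that rescalings at a point converge to a Euclidean minimal cone, and then to quote Taylor's classification, you need at least an (almost-)monotonicity formula for the density ratio $P(\E;B(x,r))/r$ guaranteeing that the density limit exists and that blow-up limits are conical; no such formula is established (or known) for a double density with $h$ merely $1/2$-Dini continuous, a volume constraint handled only through the $\eps$-$\eps^\beta$ property, and a growth condition with general $\eta$. Likewise, the ``standard decay-of-excess iteration'' for clusters with a Dini-type gauge is Tamanini-type theory that would have to be rebuilt here, not cited; under hypotheses this weak, carrying out those steps is essentially equivalent to proving the theorem, so leaving them as ``standard'' is a genuine gap rather than a routine omission.

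A second concrete gap is the isolation of junction points. Your argument --- ``a cluster of triple points would force the local perimeter density to exceed the single-arc bound'' --- does not work as stated: a single triple junction already exceeds the single-arc length bound ($\approx 2r$ in $B(x,r)$), so to exclude a second nearby junction you must beat the \emph{triple-junction} bound ($\approx 3\tilde h_{\rm min}\rho$, with upper bound roughly $\tfrac{19}{6}\tilde h_{\rm min}\rho$ from the Steiner comparison). Producing the extra length is a real argument: the paper uses the no-islands lemma at \emph{both} junctions plus two disjoint ``curved rectangles'' to force $P(\E;B(x,\rho))\geq \tilde h_{\rm min}(4\rho-d)$, which contradicts the upper bound only after a careful choice of radii and angles. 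Relatedly, your lower density bound ``by the weighted isoperimetric inequality applied to each chamber'' implicitly needs a no-islands/infiltration statement ensuring each chamber with positive measure in $B(x,r)$ actually reaches $\partial B(x,r)$; this must be proved (the paper does it by a comparison whose error is again controlled by the $\eps$-$\eps^\beta$ property and the smallness of $\Ceeb$ in the critical case $\eta\beta=1$). Your identification of the exponent $\gamma=\tfrac12\min\{\eta\beta-1,\alpha\}$ and the role of the $1/2$-Dini assumptions in making dyadic sums converge are correct in spirit and match the mechanism the paper uses in its interface-regularity section, but as it stands the proposal does not yet constitute a proof.
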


Notice that, as said above, if $h$ is locally $\alpha$-H\"older then the $\eps-\eps^\beta$ property is surely true with $\beta=(2-\alpha)^{-1}$. But of course, it could be true also with some larger $\beta$.

A few comments are now in order. First of all we observe that, in the classical Euclidean case, one has $\beta=1$ and $\eta=2$, hence the assumptions of our result cover an extremely more general case than the classical one.

Concerning the case $\eta\beta=1$, in order to get the Steiner property of $\E$ we have added a Dini-type property on $\Ceeb$, while if $\eta\beta>1$ no additional assumption was needed. 
In fact, the ${\rm C}^1$ regularity of the boundary fails if $\eta\beta=1$ without extra assumptions, already fails if $m=1$. For instance, let us consider the densities given by $g\equiv 1$ in the whole $\R^2$, while $h\equiv 1$ on a closed square of unit volume in $\R^2$, and $h\equiv 3$ outside. It is easy to see that the only minimizer with unit volume is the square $Q$ itself, which is not ${\rm C}^1$. In this case, the $\eps-\eps^{1/2}$ property holds true since $h$ is bounded away from $0$ and $+\infty$, and the $\eta$-growth condition is satisfied with $\eta=2$.

On the bright side, Theorem~\ref{main} can always be applied if $g$ is locally bounded and $h$ is $1/4$-Dini continuous (i.e., $\sqrt[4]{\omega_h}$ satisfies the Dini property). Indeed, in this case $\eta=2$ and $\beta=1/2$, and the required continuity of $h$ and $\Ceeb$ follows by the fact that $\Ceeb[t]\lesssim \sqrt{\omega_h(\sqrt{t})}$
, already observed above. The fact that, in order to get ${\rm C}^1$ regularity of the boundary, some $1/2$-Dini property is needed, is standard, see for instance~\cite{Tam,Mag}.\par

It is to be remarked that the boundedness of an optimal cluster is false in general, but true under quite mild assumptions. The boundedness of isoperimetric sets, or optimal clusters, is a well studied question, also because it is deeply connected with the existence, see for instance~\cite{CP,DFP,PS18,PS19}. Of course, whenever optimal clusters are a priori known to be bounded, as soon as Theorem~\ref{main} applies then they are made by a finite (and not just locally finite) union of regular arcs.

We conclude this introduction by pointing out that, in the isoperimetric problem with double density, one can consider an anisotropic density for the perimeter, that is, the density $h$ may also depend on the direction of the normal vector. In other words, $h$ is defined on $\R^2\times \S^1$, and the term $h(x)$ in the right definition in~(\ref{weightedvolper}) is replaced by $h(x,\nu_E(x))$, where $\nu_E(x)$ is the unit normal vector to $\partial^*E$ at $x\in\partial^* E$. The anisotropic case is of course more complicate to treat, but it is a very important generalization, in particular it is quite natural when considering manifolds with density (see for instance~\cite[Chapter 18]{Morganbook}). We are able to study the Steiner property also in the general anisotropic case, to which the parallel paper~\cite{FPS2} is devoted. An important peculiarity of the isotropic case is the $120^\circ$ property, which is in general false in the anisotropic case.

\subsection{Plan of the paper}

The proof of Theorem~\ref{main} is presented in Section~\ref{sec:proof}. To help the reader, we present a quick scheme of the construction in Section~\ref{s:scheme} below. Then, in Section~\ref{s:examples} we apply Theorem~\ref{main} to specific examples. In particular, in Section~\ref{ssgr} we consider the so-called Grushin perimeter, establishing the validity of (a suitable reinterpretation of) the Steiner property, that was suggested in~\cite{FS}. The characterisation of minimal double bubbles in this context remains an open problem. In addition, in Section~\ref{ssgauss} we consider the case of two equal Gaussian densities, proving the Steiner property and the smoothness of the free boundary of minimal clusters.

\subsection{Scheme of the proof}\label{s:scheme}

The core of the proof is to show that, under the assumptions of Theorem~\ref{main}, minimal clusters have (locally) finitely many triple junctions at 120 degrees.
Once this is proved, the argument to obtain either the ${\rm C}^1$ or the ${\rm C}^{1,\gamma}$ regularity of the free boundary is standard. In particular, if $\eta\beta=1$ it also relies on the $1/2$-Dini continuity of $h$ and of $t\mapsto \Ceeb[t]$, see Section~\ref{s:interface}. 

In order to show that multiple junctions of minimal clusters are locally finite (and triple), we start by observing two facts. First of all, a simple geometric argument, that we present in Lemma~\ref{steiner}, allows us to obtain a quantitative description of the ``$120^\circ$ rule'' in the plane by parametrizing the (Euclidean) length of the Steiner tree between three points in terms of an angle between them. 
The second observation is contained in Lemma~\ref{13/2} where we show that the (Euclidean) length of the (reduced) boundary of a minimal cluster inside a small ball is controlled by a constant multiple of its radius. This is a simple consequence of the validity of the $\varepsilon-\varepsilon^\beta$ property and of the $\eta$-growth condition, summarised together as in Lemma~\ref{labello}.

Following the two observations above, consider a minimal cluster $\mathcal E=\{E_1,\dots,E_m\}$ and call ``colors'' the regions $E_0,E_1,\dots,E_m$. We want to show that the junctions of $\partial\mathcal E$ consist of triple points only and that they are a positive distance apart one from another. To this purpose,
in Lemma~\ref{3points} we show that $\partial^*\mathcal E$ intersects the boundary of a ball $B(x,r)$ in at most $3$ points for many small radii $r$. Moreover, the ``no-island'' Lemma~\ref{noisland} ensures us that there are no colors $E_i$, $i=0,\dots,m$ having positive measure inside a small ball $B(x,r)$, 
without containing a non-trivial part of its boundary, that is, with positive $\mathcal H^1$ measure. 
These two facts together then allow to deduce that the colors $E_i$ having positive measure inside a sufficiently small ball are at most three.

Calling $x\in\R^2$ a $3$-color point if (at least) three different ``colors'' of $\mathcal E$ intersect $B(x,r)$ for all $r>0$, the above analysis allows us to infer that at (many) small scales any $3$-color point has exactly $3$ boundary points on $\partial B(x,r)$, see Lemma~\ref{9/10}. This is a key ingredient in the proof of Proposition~\ref{dist>R6}, where we conclude by showing that any two $3$-color points of $\mathcal E$ are indeed a positive distance apart.

To conclude we are only left to show that $3$-color points are indeed triple junctions. This will follow, once the regularity of the free boundary is proved as in Section~\ref{s:interface}, by observing that at $3$-color points, only three regular arcs of $\partial E$ are meeting. 

We emphasize that the assumption that $h$ is positive and locally bounded and the validity of the $\varepsilon-\varepsilon^\beta$ property and of the $\eta$-growth condition are used along the entire argument. Instead, the $1/2$-Dini continuity of $h$ (and of $t\mapsto \Ceeb[t]$) is only used to prove the regularity of the free boundary, and thus multiple points must be finitely many and triple even removing this assumption. Moreover, while the simple, quantitative Steiner-type estimate of Lemma~\ref{steiner} is at the basis of the proof in the isotropic case, the anisotropic situation needs more delicate arguments (see \cite[Section~2.2]{FPS2}).

\section{Proof of the main result\label{sec:proof}}

The proof of the main result, Theorem~\ref{main}, is presented in this section. In turn, this is subdivided in four subsections. The first one collects some basic definitions and technical tools, in the second one we show that there are finitely many junction points, each of which where exactly three different sets meet, and in the third one we obtain the regularity. The actual proof of the theorem, presented in the last subsection, basically only consists in putting the different parts together.

Since we aim to prove Theorem~\ref{main}, from now on we assume that $h$ is locally $1/2$-Dini continuous and that the local $\eta$-growth condition holds for some $\eta\ge 1$.

\subsection{Some definitions and technical tools}

Let us fix some notation, that will be used through the rest of the paper. Since we are interested in a local property, in the proof of Theorem~\ref{main} we will immediately start by fixing a big closed ball $D\subseteq\R^2$, and the whole construction will be performed there. Hence, all the following definitions will depend upon $D$, in particular we assume that $|B(x,r)|\leq \Cgr r^\eta$ for every ball $B(x,r)\subseteq D$.\par

Since $h$ is locally $1/2$-Dini continuous, hence in particular continuous, we can call $\omega:\R^+\to\R^+$ its modulus of continuity inside $D$. In particular, if $h$ is locally $\alpha$-H\"older, then $\omega(r) \leq C r^\alpha$ for a suitable constant $C$. Moreover, we will call $0<h_{\rm min}\leq h_{\rm max}$ the maximum and the minimum of $h$ in $D$. Keep in mind that, if $\eta\beta=1$, then the function $t\mapsto \Ceeb[t]$ is assumed to satisfy the $1/2$-Dini property, so in particular it is infinitesimal. As a consequence, we can fix $\bar\eps$ so small that $\Ceeb$ is as small as desired, in particular we will use several times that
\begin{equation}\label{Cperissmall}
\Ceeb=\Ceeb[\bar\eps] \ll \frac{h_{\rm min}^2}{\Cgr^\beta\,h_{\rm max}}\,.
\end{equation}
We can easily observe now a simple estimate between volume and perimeter of any set $E$. 

\begin{lem}[Isoperimetric inequality with exponent]\label{disgusting}
For every set $E\subseteq D$ we have
\[
P(E) \geq \frac {h_{\rm min}}{\Cgr^{1/\eta}}\, |E|^{1/\eta}\,.
\]
\end{lem}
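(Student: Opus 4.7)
The plan is to reduce the weighted inequality to a purely Euclidean one in two moves. Since $h\ge h_{\rm min}$ on $D$, we immediately have $P(E)\ge h_{\rm min}\,\H^1(\partial^*E)$, so the claim follows at once from the stronger geometric estimate
\[
|E|\le \Cgr\,\Bigl(\frac{\H^1(\partial^*E)}{2}\Bigr)^{\!\eta},
\]
which is actually a factor of $2^\eta$ better than what the lemma states.

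For the geometric bound I would first establish a projection estimate: for any planar set $A$ of finite perimeter and any direction $\theta\in\S^1$, the projection $\pi_\theta(A)$ of $A$ onto the line orthogonal to $\theta$ satisfies $\H^1(\pi_\theta(A))\le \H^1(\partial^*A)/2$. This follows from standard $BV$ slicing: for a.e.\ $s$ the one-dimensional section $A_s:=A\cap\pi_\theta^{-1}(s)$ is a set of finite perimeter in the slicing line with $\int \H^0(\partial^*A_s)\,ds\le \H^1(\partial^*A)$, and when nonempty each such slice contributes $\H^0(\partial^*A_s)\ge 2$.

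Next, I would decompose $E=\bigsqcup_i E_i$ into its indecomposable components (the standard measure-theoretic notion of connected pieces for sets of finite perimeter), so that $|E|=\sum_i|E_i|$, $\H^1(\partial^*E)=\sum_i\H^1(\partial^*E_i)$, and each $E_i$ admits a measure-theoretically connected representative. Projecting $E_i$ along the direction realizing its diameter then produces an interval of length at least $\mathrm{diam}(E_i)$, so the projection estimate yields $\mathrm{diam}(E_i)\le \H^1(\partial^*E_i)/2$. Fixing any $x_i\in E_i$, the inclusion $E_i\subseteq B(x_i,\mathrm{diam}(E_i))$ together with the $\eta$-growth condition gives
\[
|E_i|\le \Cgr\,\mathrm{diam}(E_i)^\eta \le \Cgr\,\bigl(\H^1(\partial^*E_i)/2\bigr)^\eta.
\]
Summing over $i$ and exploiting the super-additivity $\sum_i a_i^\eta\le (\sum_i a_i)^\eta$ valid for $\eta\ge 1$ and $a_i\ge 0$ completes the estimate.

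The main obstacle is the careful treatment of ``connectedness'' for sets of finite perimeter, for which the topological notion is inadequate and one must invoke the indecomposable-components theorem instead. A minor technical point is that the $\eta$-growth condition only applies to balls $B\subseteq D$, which is harmless after a mild enlargement of $D$, or by absorbing into the constants the trivial case of components of diameter larger than $R_\eta$, whose Euclidean perimeter is already of order $R_\eta$.
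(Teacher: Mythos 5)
Your proof is correct in substance, it even yields (as does the paper's own argument) a slightly better constant than stated, but it follows a genuinely different route. The paper reduces to a polygonal set by approximation and then invokes Gustin's boxing inequality, covering $E$ by balls $B(x_i,r_i)$ with $2\sqrt2\,\sum_i r_i\le \H^1(\partial^* E)$; the growth condition on each ball and the subadditivity of $t\mapsto t^{1/\eta}$ then give the claim. You instead stay at the level of sets of finite perimeter: you split $E$ into its indecomposable components, bound the essential diameter of each component by half of its Euclidean perimeter via one-dimensional slicing, enclose each component in one ball to which the growth condition applies, and conclude by superadditivity of $t\mapsto t^\eta$. What your route buys is the absence of any approximation step (with merely l.s.c.\ densities this is the least transparent point of the paper's proof) and the replacement of Gustin's theorem by textbook ingredients (the decomposition theorem of Ambrosio--Caselles--Masnou--Morel, which indeed gives $|E|=\sum_i|E_i|$ and $\H^1(\partial^* E)=\sum_i\H^1(\partial^* E_i)$, plus Vol'pert-type slicing); what it costs is the need for one more structural fact, namely that the essential projection of an indecomposable set onto a line is, up to a null set, an interval. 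You assert this rather than prove it; it follows in one line, since cutting at a level whose slice is $\H^1$-negligible splits the perimeter additively and would decompose the component, but it should be said.

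Three small points of precision. First, with your convention for $\pi_\theta$ you must project with $\theta$ \emph{orthogonal} to the direction realizing the diameter, so that the image lies on the line parallel to the diameter; as written, ``projecting along the direction realizing its diameter'' would give the width, not the diameter. Second, ``when nonempty each slice contributes at least $2$'' should read ``when of positive measure'', i.e.\ you should work throughout with the essential projection and the essential diameter (harmless, since only $|E_i|$ and a ball containing $E_i$ up to a null set enter the argument). Third, the fact that the enclosing balls need not be contained in $D$ nor have radius below $R_\eta$ is a wrinkle your proof shares with the paper's, whose Gustin balls may also exit $D$; handling it by a mild enlargement of $D$, as you propose, is exactly what is needed, since all constants are in any case relative to the fixed ball $D$.
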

\begin{proof}
By approximation, we can limit ourselves to consider the case of a planar, polygonal set $E$. A classical result from Gustin (see~\cite{Gustin}) says that such a set can be covered with countably many balls $B_i=B(x_i,r_i)$ in such a way that
\[
\haus^1(\partial^* E) \geq 2\sqrt 2 \sum\nolimits_i r_i\,.
\]
Keeping in mind that $\eta\geq 1$, for any $E\subseteq D$ we deduce
\[\begin{split}
P(E)&\geq h_{\rm min} \haus^1(\partial^* E)
\geq 2\sqrt 2 h_{\rm min} \sum\nolimits_i r_i
\geq \frac{2\sqrt 2 h_{\rm min}}{\Cgr^{1/\eta}}\, \sum\nolimits_i |B(x_i,r_i)|^{1/\eta}\\
&\geq \frac{2\sqrt 2 h_{\rm min}}{\Cgr^{1/\eta}}\, \Big(\sum\nolimits_i |B(x_i,r_i)|\Big)^{1/\eta}
\geq \frac{2\sqrt 2 h_{\rm min}}{\Cgr^{1/\eta}}\, |E|^{1/\eta}\geq \frac{h_{\rm min}}{\Cgr^{1/\eta}}\, |E|^{1/\eta}\,,
\end{split}\]
so the proof is concluded.
\end{proof}

The following is a simple geometric fact. This is specific for the isotropic case. The analogous property in the anisotropic case is weaker and much more complicate to obtain.

\begin{lem}[The $120^\circ$ net property]\label{steiner}
There exists a continuous and strictly increasing function $L:[0,2\pi/3]\to [1,2]$ such that, if $x,\,y,\,z$ are three points in $\R^2$ such that $|y-x|=|z-x|$, and $\theta=\angle zxy\in [0,2\pi/3]$, then there exists a connected set $\Gamma$ contained in the triangle $xyz$, containing $x,\,y$ and $z$ and such that $\haus^1(\Gamma)=L(\theta) |y-x|$.
\end{lem}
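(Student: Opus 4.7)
The plan is to construct the set $\Gamma$ explicitly as the Steiner tree of the three points $x,y,z$, and then compute $L(\theta)$ by elementary trigonometry. Write $r=|y-x|=|z-x|$ and place coordinates so that $x$ is the origin, with the bisector of $\angle zxy$ along the positive horizontal axis; then $y=(r\cos(\theta/2),r\sin(\theta/2))$ and $z=(r\cos(\theta/2),-r\sin(\theta/2))$, and the triangle $xyz$ is symmetric about the horizontal axis.

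By this symmetry it is natural to look for a Steiner point $s=(t,0)$ on the axis, and to take $\Gamma$ to be the union of the three segments $[s,x]\cup[s,y]\cup[s,z]$. The defining condition is that the three half-lines from $s$ to $x,y,z$ form angles of $2\pi/3$. Since the direction $s\to x$ is horizontal (pointing to the left), one obtains that the direction from $s$ to $y$ must form an angle of $\pi/3$ with the positive horizontal axis, which gives
\[
t = r\Bigl(\cos(\theta/2)-\tfrac{1}{\sqrt 3}\sin(\theta/2)\Bigr), \qquad |s-y|=|s-z|=\tfrac{2}{\sqrt 3}\,r\sin(\theta/2).
\]
A short computation then yields
\[
\mathcal H^1(\Gamma)= t+2|s-y| = r\bigl(\cos(\theta/2)+\sqrt 3\,\sin(\theta/2)\bigr),
\]
so the natural candidate is $L(\theta):=\cos(\theta/2)+\sqrt 3\sin(\theta/2)$.

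It then remains to check three things, all routine. First, $L(0)=1$, $L(2\pi/3)=\tfrac12+\tfrac{\sqrt 3\cdot\sqrt 3}{2}=2$, and $L'(\theta)=\tfrac12(\sqrt 3\cos(\theta/2)-\sin(\theta/2))=\cos(\theta/2+\pi/6)$, which is strictly positive on $[0,2\pi/3)$; hence $L\colon[0,2\pi/3]\to[1,2]$ is continuous and strictly increasing. Second, $t\in[0,r\cos(\theta/2)]$ for every $\theta\in[0,2\pi/3]$ (with equality to $r\cos(\theta/2)$ at $\theta=0$ and to $0$ at $\theta=2\pi/3$), so $s$ belongs to the closed triangle $xyz$; consequently the three segments from $s$ to the vertices are contained in the triangle, and so is $\Gamma$. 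Third, $\Gamma$ is evidently connected and contains $x,y,z$.

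The degenerate endpoints fit this picture naturally: at $\theta=0$ the point $s$ coincides with $y=z$, so $\Gamma$ reduces to the segment $[x,y]$ of length $r$; at $\theta=2\pi/3$ the point $s$ coincides with $x$, so $\Gamma=[x,y]\cup[x,z]$ has length $2r$. There is no real obstacle in this argument: the whole proof consists in identifying the correct position of the Steiner point by the $120^\circ$ rule and performing the trigonometric computation.
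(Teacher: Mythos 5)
Your proof is correct and follows essentially the same route as the paper: the paper also takes $\Gamma$ to be the three segments joining $x$, $y$, $z$ to the Fermat point (characterized by the three $2\pi/3$ angles, degenerating to $y=z$ at $\theta=0$ and to $x$ at $\theta=2\pi/3$) and notes that $L(\theta)$ is an explicit, strictly increasing trigonometric function with $L(0)=1$, $L(2\pi/3)=2$. The only difference is that you carry out the computation explicitly, obtaining $L(\theta)=\cos(\theta/2)+\sqrt 3\sin(\theta/2)$ and verifying monotonicity via $L'(\theta)=\cos(\theta/2+\pi/6)>0$, which the paper leaves as an ``elementary calculation''.
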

\begin{proof}
There exists a unique point $w$, sometimes called Fermat point, contained in the triangle $xyz$, such that the three angles $\angle ywz$, $\angle zwx$ and $\angle xwy$ are all equal to $2\pi/3$. In particular, $w=z=y$ in the case $\theta=0$, while $w=x$ if $\theta=2\pi/3$. The set $\Gamma$ is then simply the union of the three segments $xw,\, yw$ and $zw$. Calling $L(\theta)$ the length of $\Gamma$ divided by $|y-x|$ (which of course does not depend on $|y-x|$ by rescaling), it is trivial to express $L(\theta)$ as an explicit trigonometric function of $\theta$. The fact that this is a strictly increasing function of $\theta$, with $L(0)=1$ and $L(2\pi/3)=2$, follows then by an elementary calculation.
\end{proof}

We introduce now the (standard) notation of relative perimeter. Given a set $E\subseteq\R^2$ of locally finite perimeter, or a cluster $\E$, and given a Borel set $A\subseteq \R^2$, the \emph{relative perimeter of $E$ (or $\E$) inside $A$} is the measure of the boundary of $E$ (or $\E$) within $A$, i.e.,
\begin{align*}
P(E;A) = \int_{A\cap\partial^* E} h(x)\, d\haus^1(x)\,, &&
P(\E;A) = \int_{A\cap\partial^* \E} h(x)\, d\haus^1(x)\,.
\end{align*}
We conclude this short section by presenting (a very specific case of) a fundamental result due to Vol'pert, see~\cite{Volpert} and also~\cite[Theorem~3.108]{AFP}.
\begin{thm}[Vol'pert]\label{volpert}
Let $E\subseteq\R^2$ be a set of locally finite perimeter, and let $x\in\R^2$ be fixed. Then, for a.e. $r>0$, one has that
\[
\partial^* E \cap \partial B(x,r) = \partial^* \big( E \cap \partial B(x,r)\big)\,.
\]
\end{thm}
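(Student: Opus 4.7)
The plan is to derive this statement as a specialization of the general slicing theorem for $\mathrm{BV}$ functions (cf.\ Theorem~3.108 in~\cite{AFP}), with concentric spheres playing the role of the slicing family. The natural slicing function is $\rho(y) = |y-x|$, which is Lipschitz with $|\nabla \rho|=1$ a.e.\ and whose level sets are precisely the circles $\partial B(x,r)$.

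First, since the statement is purely local in $r$, one may intersect $E$ with a large ball centered at $x$ and thereby assume without loss of generality that $E$ has globally finite perimeter. The classical coarea formula applied to $\rho$ then gives
\[
\int_0^{+\infty} \H^0\bigl(\partial^* E \cap \partial B(x,r)\bigr)\, dr \;\le\; \H^1(\partial^* E) \;<\; +\infty\,,
\]
so that for a.e.\ $r>0$ the intersection $\partial^* E \cap \partial B(x,r)$ consists of finitely many points. In particular, for such $r$ the slice $E \cap \partial B(x,r)$ is a set of finite perimeter inside the one-dimensional manifold $\partial B(x,r)$.

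The main step is then to invoke the general slicing theorem for the $\mathrm{BV}$ function $\chi_E$ and the slicing function $\rho$: this yields that, for a.e.\ $r>0$, the reduced boundary of $E\cap\partial B(x,r)$ \emph{computed inside the circle} $\partial B(x,r)$ coincides $\H^0$-almost everywhere with $\partial^* E \cap \partial B(x,r)$. Since both sets are finite for a.e.\ such $r$, discarding a further Lebesgue-null set of radii upgrades the $\H^0$-a.e.\ equality to a pointwise one, yielding the claim.

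The only genuine subtlety---and the main obstacle I would expect---is the bookkeeping between the two competing notions of reduced boundary: one in the ambient plane $\R^2$, and the other in the one-dimensional manifold $\partial B(x,r)$. The match between them rests on aligning the blow-ups of $\chi_E$ at points of $\partial^* E$ with the jump structure of the trace of $\chi_E$ on $\partial B(x,r)$, which is exactly the content of the classical Vol'pert theorem. Once that is accepted as a black box, the argument reduces to the short coarea bound above together with the pointwise identification on a set of full measure.
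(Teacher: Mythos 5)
The paper offers no proof of this statement at all---it is quoted as ``a very specific case'' of Vol'pert's theorem, with references to \cite{Volpert} and to \cite[Theorem~3.108]{AFP}---and your reduction via the coarea bound plus the BV slicing theorem of \cite[Theorem~3.108]{AFP} is precisely the justification those citations point to, so your approach matches the paper's treatment. Just note that your closing paragraph accepts ``the classical Vol'pert theorem'' as a black box, which is the very statement at issue (and is also where the finite-perimeter structure of the slices really comes from, not from the coarea estimate alone), so what you have written is, like the paper, a citation-level argument rather than a self-contained proof.
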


Notice that, for almost every $r>0$, both sets in the above equality consist of finitely many points. In particular, $E\cap\partial B(x,r)$ is a subset of the circle $\partial B(x,r)$, and its boundary has to be considered in the $1$-dimensional sense. More precisely, for almost every $r>0$ the set $E\cap \partial B(x,r)$ essentially consists of a finite union of arcs of the circle, and the boundary is simply the union of the endpoints of all of them.

Throughout the rest of the paper, we will often consider intersections of sets with balls. Even if this will not be written every time, we will always consider only balls for which Vol'pert Theorem holds true. So, in particular, in Section~\ref{s:puntitripli}, statements like ``for all radii'' actually mean ``for almost all radii''. This restriction will not cause any drawback in the construction.

\subsection{Finitely many triple points}\label{s:puntitripli}

We now start our construction for proving Theorem~\ref{main}. Through this section and the following one, $\E$ is a fixed, minimal cluster, satisfying the assumptions of Theorem~\ref{main}, and $D$ is a fixed, closed ball. The aim of this section is to show several preliminary properties of $\E$, eventually establishing that $\partial^* \E$ only admits (in $D$) finitely many ``$3$-color points'', see Definition~\ref{def3cp} and Proposition~\ref{dist>R6}. In the sequel we will derive that all the junction points (i.e., the points where at least $3$ of the ${\rm C}^1$ arcs of $\partial\E$ meet) are actually ``$3$-color points'', and in particular triple points (i.e., points where exactly $3$ arcs meet).\par

We set $R_1=\min\{R_\beta,\,R_\eta\}$. In the following, we will define several different values of $R_i$ with $R_1\geq R_2\geq R_3\, \cdots$. Each of these constants will only depend on $\E$, $D$, $g$ and $h$ (the dependence on $\E$, $g$ and $h$ is actually only through the constants $h_{\rm min}$ and $h_{\rm max}$ and on the values of the constants $\Ceeb,\,\Cgr,\,\bar\eps$ and $\eta$ of Definition~\ref{defge}).

\begin{lem}[Small ball competitor]\label{labello}
Let $B(x,r)\subseteq D$ be a ball with $|B(x,r)|<\bar\eps/2$ and $r<R_1$, and let $\E'$ be a cluster which coincides with $\E$ outside $B(x,r)$. There exists another cluster $\E''$ such that $|\E''|=|\E|$, $\E''\cap B(x,r)=\E'\cap B(x,r)$ and, calling $\eps=|\E|-|\E'|$,
\begin{equation}\label{proplab}
P(\E'')\leq P(\E') + \Ceeb |\eps|^\beta \leq P(\E') + \Ceeb (2\Cgr r^\eta)^\beta \,.
\end{equation}
\end{lem}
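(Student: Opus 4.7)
The plan is to apply the $\eps$-$\eps^\beta$ property to $\E$ at the point $x$ with volume defect $\eps=|\E|-|\E'|$, obtaining a cluster $\F$ that coincides with $\E$ in a neighborhood of $\bar B(x,r)$, and then to glue $\E'$ inside $B(x,r)$ with $\F$ outside. The crucial point that makes the gluing cost nothing in perimeter is that the radius $R_\beta$ of Definition~\ref{defge} is at least $R_1>r$, so that $\F=\E$ on the whole open ball $B(x,R_\beta)$: inside the annulus $B(x,R_\beta)\setminus\bar B(x,r)$ the three clusters $\E$, $\E'$, $\F$ then all agree, preventing any extra perimeter from appearing at the interface.

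I would first check that the $\eps$-$\eps^\beta$ property is applicable, i.e.\ that $|\eps|<\bar\eps$. Since $\E'$ coincides with $\E$ outside $B(x,r)$, each entry satisfies $\eps_i=|E_i\cap B(x,r)|-|E'_i\cap B(x,r)|$, and the essential disjointness of the $E_i$'s (and of the $E'_i$'s) inside the ball gives
\[
|\eps|\leq\sum_{i=1}^m|\eps_i|\leq 2|B(x,r)|\leq 2\Cgr r^\eta<\bar\eps.
\]
The property then yields $\F$ with $\F=\E$ a.e.\ on $B(x,R_\beta)$, $|\F|=|\E|+\eps$, and $P(\F)\leq P(\E)+\Ceeb|\eps|^\beta$. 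I would then set $E''_i:=(E'_i\cap B(x,r))\cup(F_i\setminus B(x,r))$, which is manifestly a cluster with $\E''\cap B(x,r)=\E'\cap B(x,r)$; using $F_i=E_i$ on $B(x,r)$, the volumes work out to $|E''_i|=|E'_i\cap B(x,r)|+|F_i|-|E_i\cap B(x,r)|=|F_i|-\eps_i=|E_i|$, so $|\E''|=|\E|$.

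For the perimeter estimate I would decompose each of $P(\E'')$, $P(\E')$, $P(\F)$ and $P(\E)$ along the three parts $B(x,r)$, $\partial B(x,r)$, $\R^2\setminus\bar B(x,r)$. On the two open pieces $\E''$ agrees respectively with $\E'$ and with $\F$, producing matching contributions. On $\partial B(x,r)$ the outer trace of $\E''$ equals that of $\E'$ (both equal to the outer trace of $\E$, since $\F=\E$ in a neighborhood of $\partial B(x,r)$ while $\E'=\E$ outside $B(x,r)$), so $P(\E'';\partial B(x,r))=P(\E';\partial B(x,r))$; the analogous identity $P(\F;\partial B(x,r))=P(\E;\partial B(x,r))$ holds for the same reason. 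Combining these gives $P(\E'')-P(\E')=P(\F)-P(\E)\leq\Ceeb|\eps|^\beta$, which together with $|\eps|\leq 2\Cgr r^\eta$ yields~\eqref{proplab}. I expect the only subtle point to be precisely this perimeter bookkeeping on $\partial B(x,r)$, which in general could be delicate but is trivialized here by the buffer annulus supplied by the strict inequality $r<R_\beta$.
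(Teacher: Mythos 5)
Your proposal is correct and is essentially the paper's own proof: apply the $\eps-\eps^\beta$ property at $x$ with $\eps=|\E|-|\E'|$, glue $\E'$ inside $B(x,r)$ with the resulting $\F$ outside, and check volume and perimeter exactly as the paper does (your explicit trace bookkeeping on $\partial B(x,r)$, justified by $\F=\E$ on $B(x,R_\beta)\supseteq \overline{B(x,r)}$, is the same mechanism the paper uses implicitly). One cosmetic slip: in your admissibility check the intermediate inequality $2\Cgr r^\eta<\bar\eps$ is neither justified by the hypotheses nor needed, since $|\eps|\le 2|B(x,r)|<\bar\eps$ follows directly from $|B(x,r)|<\bar\eps/2$, while $|\eps|\le 2\Cgr r^\eta$ is only used for the final bound in~\eqref{proplab}.
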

\begin{proof}
Since
\[
|\eps| \leq \sum_{i=1}^m |\eps_i| \leq 2 |B(x,r)| < \bar\eps\,,
\]
we can apply the $\eps-\eps^\beta$ property to $\E$ with constant $\eps$ and point $x$. Hence, there is another cluster $\F$ such that $\F=\E$ inside $B(x,R_\beta)\supseteq B(x,r)$, and moreover $|\F|=|\E|+\eps$ and $P(\F)\leq P(\E) + \Ceeb |\eps|^\beta$. We define then the cluster $\E''$ as the cluster which coincides with $\E'$ inside $B(x,r)$, and with $\F$ outside of $B(x,r)$. Its volume is
\[\begin{split}
|\E''| &= |\E' \cap B(x,r)| + |\F\setminus B(x,r)|\\
& = |\E\cap B(x,r)| + |\E'| - |\E| + |\E\setminus B(x,r)| + |\F| - |\E|
=|\E|\,.
\end{split}\]
Keeping in mind the growth condition, we have $|\eps| \leq 2 |B(x,r)| \leq 2 \Cgr r^\eta$. As a consequence, the perimeter of $\E''$ can be evaluated as
\[\begin{split}
P(\E'') &= P(\E'; B(x,r)) + P\big(\F;\R^2\setminus B(x,r)\big)\\
&= P(\E; B(x,r)) +P(\E')-P(\E) + P\big(\E;\R^2\setminus B(x,r)\big) + P(\F)-P(\E)\\
&\leq P(\E') + \Ceeb |\eps|^\beta
\leq P(\E') + \Ceeb (2\Cgr r^\eta)^\beta\,.
\end{split}\]
The proof is then concluded.
\end{proof}

\begin{lem}[Length in a ball is controlled by radius]\label{13/2}
There exists a constant $R_2\leq R_1$ such that, for every $B(x,r)\subseteq D$ with $r<R_2$, one has
\begin{equation}\label{step1}
\haus^1(\partial^* \E\cap B(x,r)) < \frac{13}2\, r\,.
\end{equation}
\end{lem}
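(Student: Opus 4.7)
The plan is to build a cheap competitor inside $B(x,r)$ and exploit minimality. Fix $B(x,r)\subseteq D$ with $r$ small enough for Vol'pert's theorem (Theorem~\ref{volpert}) to apply on $\partial B(x,r)$; by Fubini this holds for a.e.\ $r$, and ensures that $\partial^*\E \cap \partial B(x,r)$ is a finite set, so $P(\E;\partial B(x,r))=0$. I would then introduce the competitor $\E':=\{E_1\setminus B(x,r),\dots,E_m\setminus B(x,r)\}$, which coincides with $\E$ outside $B(x,r)$, has no interior boundary in $B(x,r)$ (inside the ball only the ``exterior color'' survives), and carries new boundary only on the circle $\partial B(x,r)$ itself, concentrated in $\partial B(x,r)\cap\bigcup_{i\geq 1}E_i$ and thus of $h$-weighted mass at most $2\pi r \sup_{B(x,r)} h$. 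Since $|\E|-|\E'|=\sum_{i=1}^m |E_i\cap B(x,r)|$ has modulus at most $2|B(x,r)|\leq 2\Cgr r^\eta<\bar\eps$ for $r$ small, Lemma~\ref{labello} produces a cluster $\E''$ with $|\E''|=|\E|$, agreeing with $\E'$ in $B(x,r)$, and $P(\E'')\leq P(\E')+\Ceeb(2\Cgr r^\eta)^\beta$.

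By minimality $P(\E)\leq P(\E'')\leq P(\E')+\Ceeb(2\Cgr r^\eta)^\beta$. Splitting both perimeters at $\partial B(x,r)$ (using Vol'pert for $\E$) and cancelling the common contribution $P(\E;\R^2\setminus\overline{B(x,r)})=P(\E';\R^2\setminus\overline{B(x,r)})$ leaves
\[
P(\E;B(x,r)) \leq 2\pi r \sup_{B(x,r)} h + \Ceeb(2\Cgr)^\beta r^{\eta\beta}.
\]
Dividing by $\inf_{B(x,r)} h>0$ and using the elementary bound $P(\E;B(x,r))\geq \inf_{B(x,r)} h\cdot\H^1(\partial^*\E\cap B(x,r))$ yields
\[
\H^1(\partial^*\E\cap B(x,r)) \leq 2\pi r\cdot\frac{\sup_{B(x,r)} h}{\inf_{B(x,r)} h} + \frac{\Ceeb(2\Cgr)^\beta}{\inf_{B(x,r)} h}\, r^{\eta\beta}.
\]

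The key observation and the main obstacle are the same: the strict inequality $2\pi<13/2$ leaves only a narrow window in which to absorb the two error contributions uniformly in $x\in D$. By uniform continuity of $h$ on the compact set $D$, the ratio $\sup_{B(x,r)} h/\inf_{B(x,r)} h\to 1$ uniformly in $x$ as $r\to 0$, so for any $\delta>0$ the first term can be bounded by $(2\pi+\delta)r$ once $R_2$ is small enough. For the remainder, when $\eta\beta>1$ it is $o(r)$ and thus dominated by $\delta r$ for $r$ small; in the critical case $\eta\beta=1$ it is linear in $r$ with coefficient $\Ceeb(2\Cgr)^\beta/h_{\rm min}$, which is below $\delta$ thanks to the standing smallness hypothesis~\eqref{Cperissmall}. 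Choosing $\delta$ so that $2\pi+2\delta<13/2$ proves \eqref{step1} for a.e.\ $r<R_2$; monotonicity of $r\mapsto \H^1(\partial^*\E\cap B(x,r))$ together with the positive slack $13/2-2\pi$ then propagates the strict inequality to every $r<R_2$.
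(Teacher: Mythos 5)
Your proof is correct and follows essentially the same route as the paper: replace $\E$ inside $B(x,r)$ by a single color (you use $E_0$, the paper uses $E_1$), restore the volumes via Lemma~\ref{labello}, and use minimality together with the uniform continuity of $h$ and the smallness of the $\Ceeb(2\Cgr r^\eta)^\beta$ term (via $\eta\beta>1$ or~\eqref{Cperissmall}) to beat the slack $13/2-2\pi$. Your extra care about a.e.\ radii and the monotonicity argument is harmless but not really needed, since the competitor estimate $P(\E')\leq P(\E)-P(\E;B(x,r))+2\pi r\sup_{\overline{B(x,r)}}h$ holds for every $r$.
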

\begin{proof}
We let $R_2\leq R_1$ be so small that
\begin{align}\label{choiceR2}
\Cgr R_2^\eta <\frac{\bar\eps} 2\,, &&
\omega(2R_2) < \frac{h_{\rm min}}{40}\,, &&
\Ceeb \big( 2 \Cgr R_2^\eta\big)^\beta < h_{\rm min} \frac{R_2}{20}\,.
\end{align}
Notice that the first two inequalities are true for every $R_2$ small enough. The same is true for the third one if $\eta\beta>1$, while if $\eta\beta=1$ the last inequality is true, regardless of $R_2$, since $\Ceeb$ is very small by~(\ref{Cperissmall}).\par

Let now $r<R_2$ and $x\in\R^2$ be as in the claim, and call $\tilde h_{\rm min}=\min\{ h(x),\, x\in \overline{B(x,r)}\}$ and $\tilde h_{\rm max}=\max\{ h(x),\, x\in \overline{B(x,r)}\}$. Let $\E'$ be the cluster defined by $E_1'=E_1\cup B(x,r)$ and $E_i'=E_i\setminus B(x,r)$ for every $2\leq i\leq m$. Clearly
\begin{equation}\label{quellaltro}
P(\E')\leq P(\E) - P(\E; B(x,r)) + 2\pi r \tilde h_{\rm max}\,.
\end{equation}
Let us call $\eps\in\R^m$ the vector given by $\eps_i=|E_i\cap B(x,r)|$ for every $2\leq i\leq m$, and $\eps_1=-|B(x,r)\setminus E_1|$, so that $|\E|=|\E'|+\eps$. Notice that $|B(x,r)|\leq \Cgr r^\eta<\bar\eps/2$ by the first property in~(\ref{choiceR2}). Hence, we can apply Lemma~\ref{labello} to get another cluster $\E''$ satisfying~(\ref{proplab}), so that
\[
P(\E'') \leq P(\E') + \Ceeb (2\Cgr r^\eta)^\beta < P(\E') + h_{\rm min} \,\frac r{20}
\]
by the third property in~(\ref{choiceR2}), which is clearly valid with every $r<R_2$ in place of $R_2$. Putting this estimate together with~(\ref{quellaltro}), and recalling that $P(\E)\leq P(\E'')$ by minimality of $\E$ and since $|\E''|=|\E|$ by Lemma~\ref{labello}, we get
\[
\haus^1(\partial^*\E\cap B(x,r)) \leq \frac{P(\E;B(x,r)) }{\tilde h_{\rm min}}
\leq 2\pi r \,\frac{\tilde h_{\rm max}}{\tilde h_{\rm min}} + \frac{h_{\rm min}}{\tilde h_{\rm min}}\,\frac r{20}
\leq 2\pi r \,\frac{\tilde h_{\rm max}}{\tilde h_{\rm min}} +\frac r{20}
<\frac{13}2\,r\,,
\]
where the last inequality follows from the second property in~(\ref{choiceR2}) since
\[
\tilde h_{\rm max}\leq \tilde h_{\rm min} +\omega(2r) \leq \tilde h_{\rm min} +\frac{ h_{\rm min}}{40}
\leq \frac{41}{40}\,\tilde h_{\rm min}\,.
\]
\end{proof}

\begin{lem}[At most $3$ intersection points]\label{3points}
There exist $R_3\leq R_2$ and $C_2>1$ such that
\begin{equation}\label{6543}
\forall\, r\leq R_3,\, \forall\, B(x,r)\subseteq D,\, \exists\ \frac r{C_2} < \rho < r:\quad \#\Big(\partial^*\E \cap \partial B(x,\rho)\Big)\leq 3\,.
\end{equation}
\end{lem}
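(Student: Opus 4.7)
I argue by contradiction: suppose $N(\rho) := \#(\partial^*\E \cap \partial B(x,\rho)) \geq 4$ for every $\rho \in (r/C_2,\,r)$. By Vol'pert's Theorem~\ref{volpert} applied to each $E_i$, combined with the coarea inequality for the function $y \mapsto |y-x|$, I have
\[
\int_0^\rho N(s)\,ds \leq \H^1(\partial^*\E \cap B(x,\rho)),
\]
and Lemma~\ref{13/2} gives $\int_0^r N(s)\,ds < 13r/2$. Used alone this only yields some $\rho$ with $N(\rho) \leq 6$, which falls short of the desired bound. To reach $\leq 3$, I need a pointwise upper bound on $\H^1(\partial^*\E \cap B(x,\rho))$ that is genuinely sensitive to the value of $N(\rho)$.

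The sharpening will come from a Steiner-type competitor. For $\rho < R_3$ with $N(\rho) = n$ and boundary points $y_1,\dots,y_n$ on $\partial B(x,\rho)$, I compare $\E$ with the cluster $\E_\rho'$ that coincides with $\E$ outside $B(x,\rho)$ and, inside, consists of the $n$ sectors bounded by the radial segments $xy_i$, each sector painted with the color of the adjacent arc on $\partial B(x,\rho)$. I then pick the adjacent pair $y_i, y_{i+1}$ realizing the smallest angular gap $\theta \leq 2\pi/n \leq \pi/2 < 2\pi/3$, and replace the two segments $xy_i, xy_{i+1}$ (total length $2\rho$) by the Steiner tree $\Gamma$ provided by Lemma~\ref{steiner} (of length $L(\theta)\rho < 2\rho$). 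Restoring the volume via Lemma~\ref{labello} at cost $\Ceeb(2\Cgr\rho^\eta)^\beta$, the minimality of $\E$ yields
\[
\H^1(\partial^*\E \cap B(x,\rho)) \leq (n - c_n)\rho + O(\rho^{\eta\beta}),
\]
with $c_n := 2 - L(2\pi/n) > 0$ for $n \in \{4,5,6\}$, and $c_n := n - 13/2 > 0$ for $n \geq 7$ (using Lemma~\ref{13/2} itself, which becomes sharper in that range). In the borderline case $\eta\beta=1$, the volume error is linear in $\rho$, but~\eqref{Cperissmall} keeps its contribution well below $c_n \rho$, preserving a positive effective gap.

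To conclude, I iterate. Combining the Steiner bound at $\rho$ with the coarea lower bound $\H^1(\partial^*\E \cap B(x,\rho)) \geq \int_{r/C_2}^\rho N(s)\,ds \geq 4(\rho - r/C_2)$ (valid under the standing contradiction hypothesis), one obtains $c_n \rho \leq 4r/C_2 + \text{small}$ for any $\rho$ with $N(\rho) = n$, so that $N(\rho) = 4$ forces $\rho \leq \rho_5 := 4r/(c_4 C_2) + \text{small}$, and hence $N(\rho) \geq 5$ throughout $(\rho_5, r)$. Running the same argument with $n=5,6,7,\dots$ produces an increasing sequence of thresholds $\rho_n$ above which $N \geq n$, obeying a recurrence of the form $c_n \rho_{n+1} \leq \sum_{k \leq n} \rho_k + \text{small}$. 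Since $c_n \geq n - 13/2$ for $n \geq 7$, an elementary calculation shows that the scale-invariant sequence $\rho_n C_2/r$ remains bounded by a universal constant $S^*$ depending only on the densities (through the $c_n$'s and on $\Ceeb,\,\Cgr$). Choosing $C_2 > S^*$ and $R_3$ small enough, every $\rho_n$ lies strictly inside $(r/C_2, r)$, forcing $N(\rho)$ to exceed every integer at $\rho$ arbitrarily close to $r$ --- which contradicts the a.e.\ finiteness of $N(\rho)$ guaranteed by Vol'pert's theorem. The main obstacle will be to make the Steiner-tree replacement a genuinely admissible cluster (assigning colors consistently in the new subregions created by the Fermat point) and to keep the $O(\rho^{\eta\beta})$ volume-correction error uniformly controlled throughout the entire iteration on $n$.
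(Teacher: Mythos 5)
Your proposal is sound in substance and uses exactly the same ingredients as the paper --- the length bound of Lemma~\ref{13/2} combined with coarea, the competitor made of radial segments with the closest pair of boundary points joined by the Steiner tree of Lemma~\ref{steiner}, the volume restoration of Lemma~\ref{labello}, minimality, and the same dichotomy $\eta\beta>1$ versus $\eta\beta=1$ handled via~(\ref{Cperissmall}) --- but you organize the induction differently. The paper first gets ``at most $6$'' with $C_2=14$ and then runs a finite \emph{downward} induction ($k=6,5,4$): assuming at least $k$ points on every circle of a slightly smaller annulus, coarea gives length at least $k\tilde r(1-C_{2,k}/C_{2,k-1})$ inside $B(x,\tilde r)$, while the segments-plus-Steiner-tree competitor has length $(k-2+L(2\pi/k))\tilde r$, and the gain $2-L(2\pi/k)$ beats the $\omega$- and $\Ceeb$-errors after enlarging $C_{2,k-1}$ and shrinking $R_{3,k-1}$; only three applications of the competitor are needed and all constants stay explicit and small. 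You instead fix the hypothesis ``$N\geq 4$ on the whole annulus'' and run an \emph{upward} iteration, confining the radii where $N=n$ below thresholds $\rho_n$ and showing the normalized thresholds stay bounded (for $n\geq 7$ Lemma~\ref{13/2} alone supplies the gain $n-13/2$, which is what makes your recurrence $c_n\rho_{n+1}\lesssim\sum_{k\leq n}\rho_k$ summable), so that choosing $C_2>S^*$ forces $N=+\infty$ on a set of radii of positive measure, contradicting finiteness; this buys a single explicit choice of $C_2$ at the price of an infinite recurrence, but note that the Steiner competitor --- and hence all $\omega$- and $\Ceeb$-errors --- is only needed for $n\in\{4,5,6\}$, so the ``uniform control along the iteration'' you worry about is not actually an issue. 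Two small points to repair when writing it up: minimality really gives $\H^1(\partial^*\E\cap B(x,\rho))\leq \frac{\tilde h_{\rm max}}{\tilde h_{\rm min}}\,(n-c_n)\rho+\Ceeb(2\Cgr\rho^\eta)^\beta/\tilde h_{\rm min}$, i.e.\ there is also a multiplicative error of order $\omega(2\rho)/h_{\rm min}$ to be absorbed by shrinking $R_3$, exactly as in the paper; and the intermediate claim ``$c_n\rho\leq 4r/C_2+{\rm small}$ for any $\rho$ with $N(\rho)=n$'' is literally correct only for $n=4$, the general case requiring the improved coarea lower bound encoded in your stated recurrence. Finally, the ``main obstacle'' you flag (coloring the competitor consistently) is disposed of in one line as in the paper: the Steiner tree lies inside the triangle with vertices $x$ and the two closest boundary points, so the tree together with the remaining radial segments is a connected set meeting $\partial B(x,\rho)$ exactly at the $n$ points, splitting the disc into $n$ regions each adjacent to a single circular arc, whose color it inherits.
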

\begin{proof}
First of all, we show that~(\ref{6543}) is true with $6$ in place of $3$ by choosing $R_3=R_2$ and $C_2=14$ directly by Lemma~\ref{13/2}. Indeed, suppose that this is false for some $B(x,r)$ as in the claim. Then, for every $r/14<\rho<r$ we have $\#\big(\partial^*\E \cap \partial B(x,\rho)\big)\geq 7$. As a consequence, by coarea formula
\[\begin{split}
\haus^1(\partial^* \E\cap B(x,r))& \geq \haus^1\big(\partial^* \E\cap (B(x,r)\setminus B(x,r/14))\big)
\geq \int_{r/14}^r \haus^0\big(\partial^*\E \cap \partial B(x,\rho)\big) \, d\rho\\
&\geq 7 \, \frac{13}{14}\, r
= \frac{13}2\, r\,,
\end{split}\]
in contradiction with~(\ref{step1}).\par
To conclude the proof is then enough to show that if~(\ref{6543}) holds with some $k\geq 4$ in place of $3$ and with two constants $C_{2,k}$ and $R_{3,k}$, then it also holds with $k-1$ in place of $3$ and with two suitable constants $C_{2,k-1}\geq C_2$ and $R_{3,k-1}\leq R_{3,k}$.\par

Let then $C_{2,k-1}\geq C_{2,k}$ and $R_{3,k-1}\leq R_{3,k}$ be two constants to be specified later, and let $B(x,r)\subseteq D$ with $r\leq R_{3,k-1}$. By assumption, there exists some $r/C_{2,k}<\tilde r<r$ for which $\partial^*\E \cap \partial B(x,\tilde r)$ contains at most $k$ points. If the points are strictly less than $k$ we are done, whatever the choice of $C_{2,k-1}\geq C_{2,k}$ and $R_{3,k-1}\leq R_{3,k}$ is. Assume then that the points are exactly $k$, say $x_1,\, x_2,\, \dots\, ,\, x_k$. We have to find some $r/C_{2,k-1}<\rho<r$ for which
\[
\#\Big(\partial^*\E \cap \partial B(x,\rho)\Big)\leq k-1\,.
\]
Assume then by contradiction that for every such $\rho$ (so in particular for every $r/C_{2,k-1}<\rho<\tilde r$) the opposite inequality holds, then by coarea formula again we deduce
\begin{equation}\label{perintilder}\begin{split}
\haus^1(\partial^* \E \cap B(x,\tilde r)) &\geq \haus^1\Big(\partial^* \E\cap \big(B(x,\tilde r)\setminus B(x,r/C_{2,k-1})\big)\Big)
\geq k \bigg(\tilde r- \frac r{C_{2,k-1}}\bigg)\\
&\geq k\tilde r \bigg(1-\frac{{C_{2,k}}}{C_{2,k-1}}\bigg)\,.
\end{split}\end{equation}
Up to renumbering, we can assume that the two points among $x_1,\, x_2,\, \dots\, x_k$ at minimal distance are $x_1$ and $x_2$, so in particular
\[
\angle{x_1}x{x_2} \leq \frac{2\pi}k<\frac {2\pi}3 \,.
\]
Let us define the set $\Sigma\subseteq B(x,\tilde r)$ as the union of the $k-2$ segments $x_i x$ for $i\geq 3$ together with the set $\Gamma$ given by Lemma~\ref{steiner} by setting $y=x_1$ and $z=x_2$. Recall that the set $\Gamma$ is contained in the triangle $xx_1x_2$ by Lemma~\ref{steiner}, hence it does not intersect the segments $x_ix$ with $i\geq 3$. As a consequence, the union of $\Sigma$ with $\partial^*\E\setminus B(x,\tilde r)$ is the boundary of a uniquely defined cluster $\E'$ which coincides with $\E$ outside of $B(x,\tilde r)$. Notice that by construction, Lemma~\ref{steiner} and~(\ref{perintilder}) one has
\[\begin{split}
P(\E')-P(\E) &\leq \tilde h_{\rm max} \haus^1(\Sigma) - \tilde h_{\rm min} \haus^1(\partial^*\E\cap B(x,\tilde r))\\
& \leq\tilde h_{\rm max} \tilde r \bigg[ k-2+L\bigg(\frac{2\pi}k\bigg) -
\frac{\tilde h_{\rm min}}{\tilde h_{\rm max}}\, k\bigg(1-\frac{{C_{2,k}}}{C_{2,k-1}}\bigg)\bigg]\,,
\end{split}\]
having set again $\tilde h_{\rm min}=\min\{ h(x),\, x\in \overline{B(x,r)}\}$ and $\tilde h_{\rm max}=\max\{ h(x),\, x\in \overline{B(x,r)}\}$. Since $\tilde r<r\leq R_{3,k-1}$ and
\[
\frac{\tilde h_{\rm min}}{\tilde h_{\rm max}} \geq 1- \frac{\omega(2r)}{h_{\rm min}}\geq 1- \frac{\omega(2R_{3,k-1})}{h_{\rm min}}\,,
\]
then, up to choose $R_{3,k-1}$ small enough and $C_{2,k-1}$ big enough, this yields
\begin{equation}\label{stimac}
P(\E')-P(\E) \leq -c\tilde r \tilde h_{\rm max}\leq - c \tilde r h_{\rm min}
\end{equation}
with $2c=2-L(2\pi/k)$. We apply again Lemma~\ref{labello} with $\eps=|\E|-|\E'|$ to get a cluster $\E''$ with $|\E''|=|\E|$, $B(x,\tilde r)\cap\E''=B(x,\tilde r)\cap\E'$ and such that
\[
P(\E'')- P(\E') \leq \Ceeb (2\Cgr r^\eta)^\beta\,.
\]
By the minimality of $\E$, putting this inequality together with~(\ref{stimac}) we obtain
\[
\Ceeb (2 \Cgr r^\eta)^\beta \geq c\tilde r h_{\rm min}
\geq \frac{cr h_{\rm min}}{C_{2,k}}\,,
\]
hence
\[
r^{\eta \beta-1} \geq \frac{c \, h_{\rm min}}{C_{2,k}\Ceeb 2^\beta \Cgr^\beta}\,.
\]
If $\eta\beta>1$, this gives a lower bound to $r$, hence we have the searched contradiction, up to possibly decrease $R_{3,k-1}$. Instead, if $\eta\beta=1$, the searched contradiction follows since $\Ceeb$ is very small by~(\ref{Cperissmall}).
\end{proof}

\begin{lem}[No-islands]\label{noisland}
There exists $R_4\leq R_3$ such that for every $x\in D$ and almost every $r\leq R_4$ so that $B(x,r)\subseteq D$, if for some $0\leq i\leq m$ one has
\begin{equation}\label{intint}
|E_i \cap B(x,r)|>0\,,
\end{equation}
then also
\begin{equation}\label{intbor}
\haus^1(E_i \cap \partial B(x,r))>0\,.
\end{equation}
\end{lem}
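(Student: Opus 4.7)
The plan is to argue by contradiction. Suppose $A := E_i \cap B(x,r)$ satisfies $|A|>0$ while $\H^1(E_i \cap \partial B(x,r))=0$. The latter condition says that $A$ is a true \emph{island} inside $B(x,r)$: up to $\H^1$-null sets, $\partial^* A = \partial^* E_i \cap B(x,r)$ lies in the open ball, so the full reduced boundary of $A$ contributes to $P(\E;B(x,r))$. My strategy is to absorb $A$ into one of its neighbouring regions, obtaining a perimeter saving comparable to $P(A)$ which, for $r$ small, should dominate the cost of the $\eps$--$\eps^\beta$ volume correction.

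More precisely, in a cluster, $\H^1$-a.e.\ point of $\partial^* A$ separates $A\subset E_i$ from exactly one other region $E_k$, $k\neq i$. Writing $\Gamma_k := \partial^* A\cap\partial^* E_k$, the $\Gamma_k$'s partition $\partial^* A$ modulo $\H^1$-null sets. For each admissible $j\neq i$ (of which there are $m$), I would define a competitor $\E'$ that agrees with $\E$ outside $B(x,r)$ and replaces $E_i$ by $E_i\setminus A$ and $E_j$ by $E_j\cup A$ inside. Since the only disappearing interface is $\Gamma_j$, and the hypothesis $\H^1(E_i\cap\partial B(x,r))=0$ guarantees that no new contribution is produced on $\partial B(x,r)$, one obtains
\[
P(\E') = P(\E) - \int_{\Gamma_j} h\, d\H^1.
\]
Since $\sum_{j\neq i}\int_{\Gamma_j}h\,d\H^1 = P(A)$, a pigeonhole argument supplies some index $j$ with $\int_{\Gamma_j}h\,d\H^1 \geq P(A)/m$, hence $P(\E')\leq P(\E)-P(A)/m$.

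To close the argument, set $\eps := |\E|-|\E'|\in\R^m$, so that $|\eps|\leq 2|A|\leq 2\Cgr r^\eta$. Choosing $R_4\leq R_3$ small enough that $\Cgr R_4^\eta<\bar\eps/2$, Lemma~\ref{labello} applies and produces $\E''$ with $|\E''|=|\E|$ and $P(\E'')\leq P(\E')+\Ceeb(2|A|)^\beta$. Minimality of $\E$ combined with the weighted isoperimetric inequality from Lemma~\ref{disgusting} then gives
\[
\frac{h_{\rm min}}{m\,\Cgr^{1/\eta}}\,|A|^{1/\eta} \leq \frac{P(A)}{m} \leq \Ceeb\,2^\beta\,|A|^\beta,
\]
that is, $|A|^{1/\eta-\beta}\leq C_0$ for an explicit constant $C_0$ depending only on the data. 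If $\eta\beta>1$ the exponent is negative, so $|A|$ is forced to exceed a fixed positive constant, contradicting $|A|\leq \Cgr R_4^\eta$ once $R_4$ is shrunk below the corresponding threshold. If instead $\eta\beta=1$ the inequality collapses to $\Ceeb\gtrsim h_{\rm min}/\Cgr^\beta$, contradicting the smallness condition~(\ref{Cperissmall}).

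The main obstacle I anticipate is the perimeter bookkeeping for $\E'$: one must argue carefully, using $\H^1(E_i\cap\partial B(x,r))=0$ together with Vol'pert's Theorem~\ref{volpert} to rule out pathologies on $\partial B(x,r)$, that relabelling $A$ as part of $E_j$ erases precisely the interface $\Gamma_j$ and nothing else, and that the decomposition $\partial^* A = \bigsqcup_k\Gamma_k$ is essentially exhaustive. Once this measure-theoretic step is in place, the remainder is a clean pigeonhole plus Lemma~\ref{labello}, with the dichotomy $\eta\beta>1$ vs.\ $\eta\beta=1$ settled exactly as in Lemma~\ref{3points}.
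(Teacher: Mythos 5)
Your proposal is correct and follows essentially the same route as the paper: isolate the island $F=E_i\cap B(x,r)$, use that its reduced boundary lies (up to $\H^1$-null sets) on interfaces with the other colors, pigeonhole to find a color absorbing at least a fraction $1/m$ of that boundary, relabel, and close via Lemma~\ref{labello}, Lemma~\ref{disgusting} and the dichotomy $\eta\beta>1$ versus $\eta\beta=1$ with~(\ref{Cperissmall}). The only (harmless) cosmetic difference is that you pigeonhole directly on the weighted perimeter $\int_{\Gamma_j}h\,d\H^1$, while the paper pigeonholes on $\H^1$-length and converts with $h_{\rm min}$ and $h_{\rm max}$.
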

Notice that, in this lemma, $i$ can also attain the value $0$. Recall that $E_0$ has been defined as $\R^2\setminus (\cup_{i=1}^m E_i)$. We remark that, in~\cite{Leonardi}, the author shows a ``no-island'' property in a clustering problem, calling it ``absence of infiltration'', and yielding almost everywhere regularity of the boundary.
\begin{proof}[Proof of Lemma~\ref{noisland}]
Let $x$ be a point in $D$, and let $B(x,r)\subseteq D$ be a ball with $r\leq R_3$ for which Vol'pert Theorem~\ref{volpert} holds true with each of the sets $E_i$'s, $i=1,\dots,m$ (this only rules out a negligible quantity of radii). Let us assume that, for some $0\leq i \leq m$, (\ref{intint}) holds while~(\ref{intbor}) does not. We have to prove that this is contradictory, provided that $r$ is is smaller than some $R_4\leq R_3$ that we are going to specify later.\par
Let us call $F=E_i\cap B(x,r)$. Since~(\ref{intbor}) is false and $r$ satisfies Vol'pert Theorem then, up to $\haus^1$-negligible subsets, $\partial^* F\subseteq B(x,r)$, and in particular
\[
\partial^* F \subseteq \bigcup_{\doppio{j\in \{0,\,1,\,\dots\,,\,m\}}{j\neq i}}\ \partial^* E_j\,,
\]
so that for some $0\leq \ell\leq m$, $\ell\neq i$ we have
\[
\haus^1(\partial^* F \cap \partial^* E_\ell)\geq \frac 1m\, \haus^1(\partial^* F)\geq \frac 1{h_{\rm max}m}\, P(F)\,.
\]
Let us then define the cluster $\E'$ as the cluster such that $E_i'=E_i\setminus F$, $E_j'=E_j$ for every $j\notin \{i,\,\ell\}$ and $E_\ell'=E_\ell\cup F$. By construction and by Lemma~\ref{disgusting} we have
\begin{equation}\label{nomesensato}\begin{split}
P(\E') &\leq P(\E) - h_{\rm min}\, \haus^1(\partial^* F\cap \partial^* E_\ell) \leq P(\E) - \frac {h_{\rm min}}{h_{\rm max}m}\, P(F)\\
&\leq P(\E) - \frac {h_{\rm min}^2}{h_{\rm max}\Cgr^{1/\eta} m}\, |F|^{1/\eta}\,.
\end{split}\end{equation}
Let us define $\eps=|\E|-|\E'|$, so that $|\eps|\leq 2|F|$, and the latter is strictly positive by~(\ref{intint}). Applying again Lemma~\ref{labello}, we get a cluster $\E''$ with $|\E''|=|\E|$, $B(x,\tilde r)\cap\E''=B(x,\tilde r)\cap\E'$, and
\[
P(\E'')\leq P(\E') + \Ceeb |\eps|^\beta \leq P(\E') + \Ceeb 2^\beta |F|^\beta\,.
\]
Putting this inequality together with~(\ref{nomesensato}), by the optimality of $\E$ we find
\[
(\Cgr r^\eta)^{1/\eta-\beta} \leq |B(x,r)|^{1/\eta-\beta} \leq |F|^{1/\eta-\beta} \leq \frac{h_{\rm max}}{h_{\rm min}^2}\,\Cgr^{1/\eta}\Ceeb 2^\beta m\,.
\]
As usual, we have to distinguish two cases in order to conclude. If $\eta>1/\beta$, then the inequality implies that $r$ cannot be too small, hence $r>R_4$ for some $R_4\leq R_3$. If $\eta=1/\beta$, then the inequality is false because $\Ceeb$ is very small by~(\ref{Cperissmall}).
\end{proof}

\begin{rmk}\label{noislandG}
Notice that the claim of the above lemma can be trivially generalized as follows. Let $E\subseteq\R^2$ be a given set, let $G\subseteq B(x,r)$ be a set with Lipschitz boundary, and assume that the analogous of the Vol'pert Theorem holds, that is, $\partial^*E_i\cap \partial G=\partial^*(E_i\cap \partial G)$ for every $1\leq i \leq m$. If $|E_i\cap G|>0$, then also $\haus^1(E_i\cap \partial G)>0$. The proof remains exactly the same, one only has to substitute $B(x,r)$ with $G$ everywhere.
\end{rmk}

\begin{cor}[At most $3$ colors]\label{atmost3}
For every $B(x,R_4)\subseteq D$ one has
\[
\# \Big\{0\leq i\leq m:\, |E_i \cap B(x,R_4/C_2)|>0 \Big\}\leq 3\,.
\]
\end{cor}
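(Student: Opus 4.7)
The idea is to look at the ``generic'' circle separating the smaller ball from the big one, and use the fact that at most $3$ points of $\partial^*\E$ can lie on it to conclude that at most $3$ arcs form its complement, each entirely in one color.

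More precisely, I would fix $x$ with $B(x,R_4)\subseteq D$ and apply Lemma~\ref{3points} to the ball $B(x,R_4)$: since $R_4\le R_3$, this provides a radius $\rho\in(R_4/C_2,R_4)$ such that
\[
\#\bigl(\partial^*\E\cap\partial B(x,\rho)\bigr)\le 3\,.
\]
Up to choosing $\rho$ in a set of full measure, I can also assume that Vol'pert's Theorem~\ref{volpert} applies simultaneously to each $E_i$, $0\le i\le m$ (a countable intersection of full-measure sets is still of full measure). Then, for each $i$, the set $E_i\cap\partial B(x,\rho)$ is a finite union of arcs of the circle $\partial B(x,\rho)$ whose endpoints belong to $\partial^*E_i\cap\partial B(x,\rho)\subseteq\partial^*\E\cap\partial B(x,\rho)$. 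Since the latter consists of at most $3$ points, the circle $\partial B(x,\rho)$ is partitioned (up to $\H^1$-negligible sets) into at most $3$ arcs, each of which is essentially contained in a single color $E_i$.

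Now take any index $i$ with $|E_i\cap B(x,R_4/C_2)|>0$. Since $B(x,R_4/C_2)\subseteq B(x,\rho)$, we have in particular $|E_i\cap B(x,\rho)|>0$, and Lemma~\ref{noisland} applied to the ball $B(x,\rho)$ (which is admissible because $\rho<R_4$) gives
\[
\H^1\bigl(E_i\cap\partial B(x,\rho)\bigr)>0\,.
\]
Hence each such color occupies (essentially) at least one of the at most $3$ arcs of $\partial B(x,\rho)$ into which the circle is partitioned. Since different colors are essentially disjoint, this immediately yields that there are at most $3$ such indices $i$, which is precisely the claim. I don't foresee any real obstacle: the only mild point is remembering to invoke Vol'pert on every $E_i$ at once, which is a standard measure-theoretic observation.
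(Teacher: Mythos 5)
Your proof is correct and follows essentially the same route as the paper: apply Lemma~\ref{3points} to $B(x,R_4)$ to find $\rho\in(R_4/C_2,R_4)$ with at most three points of $\partial^*\E$ on $\partial B(x,\rho)$, use Vol'pert to see that at most three colors meet that circle in positive $\H^1$-measure, and conclude via the contrapositive of the no-island Lemma~\ref{noisland}. Your explicit arc-decomposition of the circle is just a spelled-out version of the step the paper summarizes in one sentence, so there is nothing to add.
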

\begin{proof}
We apply Lemma~\ref{3points} to the ball $B(x,R_4)$, finding some $R_4/C_2<\rho<R_4$ for which $\partial^* \E\cap \partial B(x,\rho)$ consists of at most three points. As a consequence, there are at most three different indices $0\leq i \leq m$ such that $E_i\cap \partial B(x,\rho)$ has positive $\haus^1$-measure (keep in mind that Vol'pert Theorem~\ref{volpert} holds true for the ball $B(x,\rho)$). Since $\rho<R_4$, by Lemma~\ref{noisland} we obtain that $|E_i\cap B(x,\rho)|$ can be strictly positive for at most three different indices $0\leq i\leq m$, and since $\rho>R_4/C_2$ the proof is concluded.
\end{proof}

\begin{defn}[$3$-color point]\label{def3cp}
A point $x\in \R^2$ is said a \emph{$3$-color point} if, for every $r>0$, we have
\begin{equation}\label{quellali}
\#\big\{0\leq i\leq m:\, |E_i\cap B(x,r)|>0\big\}\geq 3\,.
\end{equation}
\end{defn}

Notice that, in view of Corollary~\ref{atmost3}, for every $3$-color point $x\in D$ the sets $E_i$ satisfying~(\ref{quellali}) are actually exactly $3$ for every $r<\min\{R_4/C_2,\, {\rm dist}(x,\partial D)\}$. This also motivates the name.\par

We can now improve the result of Lemma~\ref{3points} for balls centered at a $3$-color point, namely, in~(\ref{6543}) the possibly large constant $C_2$ can be replaced by any constant strictly larger than $1$. We are going to prove the result with constant $25/24$ just because this is the value that we will need later, but it is clear from the proof that nothing changes with any other constant strictly larger than $1$.
\begin{lem}\label{9/10}
There exists $R_5\leq R_4$ such that, for any $3$-color point $x\in D$ and any $r\leq \min\{R_5,\, {\rm dist}(x,\partial D)/C_2\}$, there is $24r/25<\bar\rho<r$ such that $\#\Big(\partial^*\E \cap \partial B(x,\bar\rho)\Big)= 3$.
\end{lem}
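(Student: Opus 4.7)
The plan is to argue by contradiction: assume that $N(\rho):=\#(\partial^*\E\cap\partial B(x,\rho))\neq 3$ for every $\rho\in(24r/25,r)$ and to build a competitor cluster with strictly smaller perimeter and the same volume as $\E$, contradicting minimality.

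\emph{Step 1 (lower bound $N\geq 3$).} I would first show that $N(\rho)\geq 3$ for every $\rho\leq R_4$. Indeed, since $x$ is a $3$-color point, by Corollary~\ref{atmost3} (which requires $R_5\leq R_4/C_2$) there are exactly three colors $E_{i_1},E_{i_2},E_{i_3}$ with positive measure in every $B(x,\rho)$. The no-island lemma (Lemma~\ref{noisland}) forces each of them to satisfy $\H^1(E_{i_j}\cap\partial B(x,\rho))>0$, so $\partial B(x,\rho)$ carries at least three alternating color arcs and hence at least three points of $\partial^*\E$. Together with the contradiction hypothesis this gives $N(\rho)\geq 4$ for every $\rho\in(24r/25,r)$.

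\emph{Step 2 (Steiner-type replacement).} In analogy with Lemma~\ref{3points}, I would pick $\tilde r\in(24r/25,r)$ for which Vol'pert's theorem applies and $N(\tilde r)=k\geq 4$. Two among the $k$ intersection points $x_1,\dots,x_k$ form at $x$ an angle $\theta\leq 2\pi/k\leq\pi/2<2\pi/3$. Applying Lemma~\ref{steiner} I construct a connecting set $\Gamma$ of $\H^1$-measure $L(\theta)\tilde r$, and combining $\Gamma$ with the $k-2$ radial segments $x_i x$ for $i\geq 3$ I obtain a replacement $\Sigma\subseteq B(x,\tilde r)$ of total length $(k-2+L(\theta))\tilde r$. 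This defines a cluster $\E'$ coinciding with $\E$ outside $B(x,\tilde r)$; invoking Lemma~\ref{labello} I correct its volume, producing $\E''$ with $|\E''|=|\E|$ and
\[
P(\E'')\leq P(\E)-\tilde h_{\min}\,\H^1(\partial^*\E\cap B(x,\tilde r))+\tilde h_{\max}\,\H^1(\Sigma)+\Ceeb(2\Cgr r^\eta)^\beta.
\]

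\emph{Step 3 (length bookkeeping).} The core quantitative estimate is a coarea lower bound for the existing length, obtained by combining the baseline $N\geq 3$ on $(0,24r/25)$ from Step~1 with the contradiction hypothesis $N\geq 4$ on $(24r/25,\tilde r)$:
\[
\H^1(\partial^*\E\cap B(x,\tilde r))\geq 3\cdot\frac{24r}{25}+4\Big(\tilde r-\frac{24r}{25}\Big).
\]
Together with the strict inequality $L(\theta)<2$, the continuity estimate $\tilde h_{\max}/\tilde h_{\min}\leq 1+\omega(2r)/h_{\min}$, and the smallness of the volume correction (either forced by~(\ref{Cperissmall}) when $\eta\beta=1$ or by $r^{\eta\beta-1}\to 0$ when $\eta\beta>1$), I expect to conclude $P(\E'')<P(\E)$, contradicting the minimality of $\E$.

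\emph{Main obstacle.} The delicate point is Step~3: the annulus $(24r/25,r)$ has width only $r/25$, so the direct Steiner-replacement balance used in Lemma~\ref{3points} (which relied on a wide interval of the form $(r/C_2,r)$) is on the edge of failure. The argument must therefore exploit in a sharp way both the baseline $N\geq 3$ on the whole $(0,24r/25)$ provided by the $3$-color structure and the strict geometric gain $2-L(\theta)>0$ provided by Lemma~\ref{steiner}, together with an appropriate choice of $\tilde r$ close to $r$. A suitable $R_5\leq R_4$ is then selected to absorb the error terms coming from the oscillation of $h$ and from the $\eps-\eps^\beta$ volume correction, so that the savings strictly dominate and the contradiction is achieved.
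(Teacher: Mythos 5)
Your Step~3 does not close, and the obstacle you flag at the end is not a technicality but a fatal one for this strategy. Under your contradiction hypothesis the extra length is only available on the thin annulus: coarea gives
\[
\H^1(\partial^*\E\cap B(x,\tilde r))\;\gtrsim\;3\tilde r+\Big(\tilde r-\tfrac{24r}{25}\Big)\;\leq\;3\tilde r+\tfrac r{25}\,,
\]
whereas your Steiner-type competitor costs $(k-2+L(\theta))\,\tilde r$. For $k=4$ with the four points nearly equally spaced you only know $\theta\leq\pi/2$, and $L(\pi/2)=\sqrt{2+\sqrt3}\approx 1.93$, so the competitor costs about $3.93\,\tilde r$, far more than the available lower bound $\approx 3.04\,r$; for larger $k$ the mismatch is worse, since the hypothesis adds only $k\cdot r/25$ of length while the replacement costs of order $k\tilde r$. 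The gain $2-L(\theta)$ that drives Lemma~\ref{3points} is genuinely a wide-annulus effect (there the interval $(r/C_{2},r)$ is long, so the existing length is close to $k\tilde r$); on an annulus of width $r/25$ no choice of $\tilde r$ and no absorption of error terms can restore the balance, so no contradiction is reached.

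The paper avoids the Steiner tree entirely at this stage. It first applies Lemma~\ref{3points} at the larger scale $C_2r$ (whence the constraint $r\le{\rm dist}(x,\partial D)/C_2$ and $R_5\le R_3/C_2$) to find $\rho\in(r,C_2r)$ with \emph{exactly} three points on $\partial B(x,\rho)$, and then measures the whole bad set $\mu=\H^1\{0<s<\rho:\ \#(\partial^*\E\cap\partial B(x,s))\geq4\}$. Your Step~1 (the baseline $\geq3$ from the $3$-color property and Lemma~\ref{noisland}) is exactly what the paper uses to get $P(\E;B(x,\rho))\geq\tilde h_{\rm min}(3\rho+\mu)$, and the competitor is the cost-matching one: the three segments joining the three points of $\partial^*\E\cap\partial B(x,\rho)$ to the center, of weighted length $\leq 3\tilde h_{\rm min}\rho+3\omega(2\rho)\rho$. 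After the volume correction of Lemma~\ref{labello}, minimality yields
\[
\mu\;\leq\;\frac{C_2r}{h_{\rm min}}\Big(3\omega(2\rho)+2^\beta\Ceeb\Cgr^\beta\rho^{\eta\beta-1}\Big)\,,
\]
which is $<r/25$ for $R_5$ small (using~(\ref{Cperissmall}) when $\eta\beta=1$); since the bad radii in $(0,\rho)$ have measure less than the width of $(24r/25,r)$, some $\bar\rho$ there carries exactly three points. The idea you are missing is precisely this: do not try to beat a $k\geq4$ configuration by reconnecting it more cheaply; instead use a competitor whose cost matches the baseline count $3$, so that the measure of all radii with $\geq4$ points becomes an excess perimeter directly controlled by the modulus of continuity of $h$ and the $\eps$--$\eps^\beta$ correction.
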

\begin{proof}
Let $R_5\leq R_4$ be a constant to be specified later, and let $x$ and $r$ be as in the claim. By Lemma~\ref{noisland}, we know that
\begin{align}\label{alm3}
\#\Big(\partial^*\E \cap \partial B(x,s)\Big)\geq 3 && \forall\ 0<s\leq \min\{R_4,\, {\rm dist}(x,\partial D)\}\,.
\end{align}
By Lemma~\ref{3points}, we find some $r<\rho< C_2 r$ such that
\[
\#\Big(\partial^*\E \cap \partial B(x,\rho)\Big)\leq 3\,,
\]
and the number is in fact exactly $3$ by~(\ref{alm3}). Notice that we have applied Lemma~\ref{3points} with $C_2 r$ in place of $r$, and this is possible only if $C_2 r\leq \min\{R_3,\, {\rm dist}(x,\partial D)\}$, which in turn is admissible up to choose $R_5\leq R_3/C_2$.\par
Let us now call
\[
\mu = \haus^1 \Big(\Big\{ 0<s<\rho:\, \#\Big(\partial^*\E \cap \partial B(x,s)\Big)\geq 4 \Big\}\Big)\,,
\]
so that the thesis follows as soon as we show that, with the right choice of $R_5$, $\mu<r/25$. In view of~(\ref{alm3}), by coarea formula we can estimate
\[
P(\E;B(x,\rho))\geq \tilde h_{\rm min} \big(3(\rho-\mu)+4\mu\big) =\tilde h_{\rm min} (3\rho+\mu)\,,
\]
having again defined $\tilde h_{\rm min}=\min\{ h(x),\, x\in \overline{B(x,\rho)}\}$ and $\tilde h_{\rm max}=\max\{ h(x),\, x\in \overline{B(x,\rho)}\}$. Let us now define $\E'$ as the cluster which coincides with $\E$ outside of $B(x,\rho)$ and such that $\partial^* \E'\cap B(x,\rho)$ is done by the three segments joining the three points of $\partial^*\E\cap \partial B(x,\rho)$ with $x$. Observe that
\[
P(\E';B(x,\rho))\leq 3\tilde h_{\rm max} \rho \leq 3\tilde h_{\rm min} \rho + 3\omega(2\rho)\rho\,.
\]
As usual, we apply Lemma~\ref{labello} to get a competitor $\E''$ with $|\E''|=|\E|$ and
\[
P(\E'') \leq P(\E') + \Ceeb (2 \Cgr \rho^\eta)^\beta\,.
\]
Putting together the last three estimates, since $P(\E)\leq P(\E'')$ and $h_{\rm min}\leq \tilde h_{\rm min}$ we find
\[
\mu \leq \frac \rho{h_{\rm min}}\, \Big( 3\omega(2\rho)+ 2^\beta \Ceeb \Cgr^\beta \rho^{\eta\beta-1}\Big)
\leq \frac {C_2 r}{h_{\rm min}}\, \Big( 3\omega(2\rho)+ 2^\beta \Ceeb \Cgr^\beta \rho^{\eta\beta-1}\Big)\,,
\]
hence the thesis reduces to check that the term in parentheses in the above estimate can be taken smaller than $h_{\rm min}/(25 C_2)$. Concerning $3\omega(2\rho)\leq 3\omega(2C_2 R_5)$, this is arbitrarily small as soon as $R_5$ is small enough, so we can suppose that this is smaller $h_{\rm min}/(50C_2)$, and to conclude we need the second term in parenthesis to be smaller than $h_{\rm min}/(50C_2)$. This is clearly true for $R_5$ small enough if $\eta\beta>1$, while in the case $\eta\beta=1$ this is true regardless of $R_5$ thanks to~(\ref{Cperissmall}).
\end{proof}

\begin{prop}[$3$-color points are a positive distance apart]\label{dist>R6}
There exists $R_6\leq R_5$ such that any two $3$-color points $x,\,x' \in D$ with $B(x,R_5)\subseteq D$ have distance at least $R_6$.
\end{prop}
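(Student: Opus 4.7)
The plan is by contradiction: suppose that for every $R_6 \le R_5$ there exist two $3$-color points $x, x' \in D$ with $B(x,R_5) \subseteq D$ and $d := |x-x'| < R_6$. I would choose $R_6 = R_5/K$ for a large absolute constant $K$ to be specified below, set $r := Kd \le R_5$, and construct a strictly cheaper competitor to $\E$, violating minimality.

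Applying Lemma~\ref{9/10} to $x$ at scale $r$ yields $\bar\rho \in (24r/25, r)$ with $\partial^*\E \cap \partial B(x, \bar\rho) = \{p_1, p_2, p_3\}$. Since $d = r/K$, the point $x'$ lies well inside $B(x, \bar\rho)$ for $K$ large, and up to further shrinking $R_5$ so that $r \le R_4/C_2$, Corollary~\ref{atmost3} forces exactly three colors to appear in $B(x, \bar\rho)$, matched with the three arcs of $\partial B(x, \bar\rho) \setminus \{p_1, p_2, p_3\}$. As competitor I take $\E'$ coinciding with $\E$ outside $B(x, \bar\rho)$ and consisting inside of the three straight segments joining each $p_i$ to $x$, with the three resulting sectors colored consistently; then $P(\E'; B(x, \bar\rho)) \le 3\tilde h_{\rm max}\bar\rho$. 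Lemma~\ref{labello} produces $\E''$ with $|\E''| = |\E|$ and $P(\E'') \le P(\E') + \Ceeb(2\Cgr\bar\rho^\eta)^\beta$.

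The heart of the argument is a lower bound on $\H^1(\partial^*\E \cap B(x, \bar\rho))$ stronger than $3\bar\rho$. The coarea formula applied to $y \mapsto |y-x|$, together with Lemma~\ref{noisland} (which forces $\#(\partial^*\E \cap \partial B(x, s)) \ge 3$ for a.e.\ $s \in (0, \bar\rho)$ since $x$ is a $3$-color point), already gives $\H^1(\partial^*\E \cap B(x, \bar\rho)) \ge 3\bar\rho$, which exactly matches the competitor's length. The extra improvement should come from the second $3$-color point $x'$: applying Lemma~\ref{9/10} to $x'$ at a small scale $\sigma \sim d$ yields three more intersection points on $\partial B(x', \bar\sigma)$, with $B(x', \bar\sigma) \subset B(x, \bar\rho)$, producing $\H^1$-length of order $\bar\sigma \sim d$ \emph{inside} a sub-ball essentially disjoint from the radial structure around $x$. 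Equivalently, an Euler-characteristic argument on $\partial^*\E \cap B(x, \bar\rho)$ (viewed as a planar graph with three leaves on $\partial B(x, \bar\rho)$ and at least two internal triple-color junctions at $x$ and $x'$) shows that accommodating two distinct triple-color points forces either a cycle or a third triple junction, contributing extra length of order $d$. Either way one obtains
\[
\H^1(\partial^*\E \cap B(x, \bar\rho)) \ge 3\bar\rho + c\, d
\]
for a universal $c > 0$.

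Combining $P(\E) \le P(\E'')$ with the competitor bound and the improved lower bound, and estimating $\tilde h_{\rm max} - \tilde h_{\rm min} \le \omega(2\bar\rho) \le \omega(2Kd)$ and $\bar\rho \le Kd$, one arrives at
\[
c\, h_{\rm min}\, d \;\le\; 3K\, \omega(2Kd)\, d \,+\, \Ceeb\, 2^\beta\, \Cgr^\beta\, (Kd)^{\eta\beta}.
\]
Dividing by $d$: if $\eta\beta > 1$ the last term is $O(d^{\eta\beta-1}) \to 0$; if $\eta\beta = 1$ it is bounded by $\Ceeb\, 2^\beta K\, \Cgr^\beta$, which is arbitrarily small by~\eqref{Cperissmall}. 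The first term tends to $0$ as $d \to 0$ by continuity of $h$. In either regime $R_6$ can be chosen so small that $c h_{\rm min} > 0$ is violated, the desired contradiction. The main obstacle is the quantitative lower bound $3\bar\rho + cd$: qualitatively obvious (a simple Steiner tripod can host only one triple-color point), but extracting the precise linear-in-$d$ excess requires either a careful coarea decomposition localized around $x'$ or the topological Euler counting alluded to above.
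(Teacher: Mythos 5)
Your overall skeleton (argue by contradiction, build the tripod competitor of the three segments joining the boundary points to the center, apply Lemma~\ref{labello}, and beat the error terms $\omega(2\rho)\rho+\Ceeb(2\Cgr\rho^\eta)^\beta$ exactly as in the $\eta\beta>1$ / $\eta\beta=1$ dichotomy) matches the paper, but the heart of the proof is missing: the lower bound $\H^1(\partial^*\E\cap B(x,\bar\rho))\geq 3\bar\rho+c\,d$ is asserted, not proved, and you say so yourself. Neither of your two suggested mechanisms works as stated. The ``extra length of order $d$ near $x'$'' is not automatically additive with the radial coarea bound $3\bar\rho$ around $x$: the boundary inside $B(x',\bar\sigma)$ does cross the circles $\partial B(x,s)$ for $s$ in a range of size $\sim\bar\sigma$, so its length is (possibly entirely) already counted in the estimate $\int_0^{\bar\rho}\#(\partial^*\E\cap\partial B(x,s))\,ds\geq 3\bar\rho$; to make the contributions add you must decompose $B(x,\bar\rho)$ into \emph{pairwise disjoint} regions and bound the perimeter in each one separately. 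This is exactly what the paper does (with $r=\tfrac54 d$, so $\rho\sim d$): two disjoint balls $B(x,2d-\rho)$ and $B(x',\rho-d)$ each contributing $3\tilde h_{\rm min}\times$radius via Lemma~\ref{noisland}, plus two ``curved rectangles'' $Q_a,Q_b$ in the annulus each contributing $\tilde h_{\rm min}(2\rho-2d)$, see~(\ref{estiQaQb}); and placing $Q_a,Q_b$ disjointly from $B(x',\rho-d)$ requires the preliminary Step~I estimate~(\ref{stima119}) that the three boundary points are nearly at mutual angles $2\pi/3$ (proved with the Steiner competitor of Lemma~\ref{steiner}), an ingredient absent from your argument. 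The bound in $Q_a,Q_b$ is itself not a pure coarea count: it needs the topological fact that a Lipschitz closed loop cannot cross $\partial^*\E$ in exactly one point.

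Your alternative ``Euler-characteristic argument on $\partial^*\E\cap B(x,\bar\rho)$ viewed as a planar graph with three leaves and two internal triple junctions'' is circular at this stage: the fact that $\partial^*\E$ is a locally finite union of arcs meeting at finitely many junction points is precisely what Proposition~\ref{dist>R6} is needed to establish (the regularity of Section~\ref{s:interface} comes afterwards and uses it), so no graph structure is available yet; only measure-theoretic tools (coarea, Vol'pert, Lemma~\ref{noisland}, the loop-parity fact) can be invoked. A minor further point: with your choice $r=Kd$ the target excess $c\,d$ is only a $1/K$ fraction of the radius, so in the case $\eta\beta=1$ your final inequality requires $\Ceeb\lesssim h_{\rm min}/K$; this is admissible only if $K$ is fixed before $\bar\eps$ in~(\ref{Cperissmall}) is chosen, whereas working at scale $\rho\sim d$ as the paper does makes the excess $\rho-d\geq\rho/6$ comparable to the radius and avoids this bookkeeping altogether.
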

\begin{proof}
Let us assume by contradiction the existence of two $3$-color points $x$ and $x'$ as in the claim with $d:=|x-x'|<R_6$, where $R_6\leq R_5$ is a constant to be specified later. Since the proof is a bit involved, we divide it in few steps for the sake of clarity.
\step{I}{A circle with three boundary points at around $120^\circ$.}
We start by applying Lemma~\ref{9/10} to the point $x$ and with $r=\frac 54 d$, which is admissible as soon as $R_6\leq \frac 45\, R_5$, finding a radius $\rho$ with
\begin{equation}\label{850}
\frac 65\,d = \frac {24}{25} \cdot \frac 54\, d< \rho < \frac 54\, d
\end{equation}
such that $\partial^* \E \cap \partial B(x,\rho)$ contains exactly three points, say $x_1,\,x_2,\,x_3$. We want to show that these three points are close to be vertices of an equilateral triangle. More precisely, we will prove that
\begin{equation}\label{stima119}
\theta=\min \Big\{\angle {x_1} x {x_2},\, \angle {x_2}x{x_3},\, \angle {x_3}x{x_1}\Big\} > \bigg(\frac 23 -\frac 1{15}\bigg)\,\pi=\frac 35\,\pi\,.
\end{equation}
Indeed, assuming just to fix the ideas that $\theta=\angle {x_1}x{x_2}$, we define the cluster $\E'$ which coincides with $\E$ outside of $B(x,\rho)$ and such that $\partial^* \E' \cap B(x,\rho)$ is done by the segment $xx_3$ and the set $\Gamma$ given by Lemma~\ref{steiner} with $y=x_1$ and $z=x_2$. By Lemma~\ref{labello} we have a cluster $\E''$ with $|\E''|=|\E|$ such that
\begin{equation}\label{stima120}\begin{split}
P(\E)&\leq P(\E'') \leq P(\E') + \Ceeb (2 \Cgr \rho^\eta)^\beta\\
&\leq P(\E) + \rho \bigg(\tilde h_{\rm max} \big(1 + L(\theta)\big) - 3\tilde h_{\rm min}+ 2^\beta\Ceeb \Cgr^\beta \rho^{\eta\beta-1}\bigg)\,,
\end{split}\end{equation}
where we have again defined $\tilde h_{\rm min}=\min\{ h(x),\, x\in \overline{B(x,\rho)}\}$ and $\tilde h_{\rm max}=\max\{ h(x),\, x\in \overline{B(x,\rho)}\}$, and where we have used the fact that
\[
P(\E;B(x,\rho)) \geq 3\tilde h_{\rm min} \rho\,,
\]
which follows from Lemma~\ref{noisland} because $x$ is a $3$-color point and by coarea formula. Arguing as usual, a straightforward computation from~(\ref{stima120}) gives
\[
2- L(\theta) \leq \frac 1{h_{\rm min}}\, \Big( 3\omega(2\rho) + 2^\beta\Ceeb \Cgr^\beta \rho^{\eta\beta-1} \Big)\,.
\]
As already done several times, both if $\eta\beta>1$ and if $\eta\beta=1$ we get that, provided $R_6$ is small enough, $L(\theta)$ is as close as $2$ as we wish, in particular we can assume that~(\ref{stima119}) holds true.\par

For later use, we remark that from~(\ref{stima120}) and by definition of $\E'$ we have
\begin{equation}\label{fromabove}\begin{split}
P(\E;B(x,\rho)) &\leq P(\E';B(x,\rho)) + \Ceeb (2 \Cgr \rho^\eta)^\beta
\leq 3 \tilde h_{\rm max}\rho + \Ceeb (2 \Cgr \rho^\eta)^\beta\\
&< \frac{19}6\, \tilde h_{\rm min} \rho\,,
\end{split}\end{equation}
again up to possibly decrease the value of $R_6$.

\step{II}{The estimate~(\ref{estiQaQb}).}
In this step we define the ``curved rectangles'' $Q_a$ and $Q_b$ and we prove the estimate~(\ref{estiQaQb}). The situation is depicted in Figure~\ref{FigPosDist}.\par
\begin{figure}[thbp]
\begin{tikzpicture}[>=>>>]
\filldraw[fill=blue!25!white, draw=black, line width=.7pt] (0.78,-4.43) arc (280:325:4.5) -- (2.46,-1.72) arc(325:280:3) -- cycle;
\filldraw[fill=blue!25!white, draw=black, line width=.7pt] (-3.69,2.58) arc (145:190:4.5) -- (-2.95,-.52) arc(190:145:3) -- cycle;
\draw[line width=1] (0,0) circle (3);
\draw[line width=1] (0,0) circle (4.5);
\fill (0,0) circle (2pt);
\draw (0,0) node[anchor=north east] {$x$};
\draw[<->] (0,0) -- (0,3);
\draw (0,1.5) node[anchor=east] {$2d-\rho$};
\draw[<->] (0,0) -- (3.75,0);
\draw (1.7,0) node[anchor=south] {$d$};
\fill (3.75,0) circle (2pt);
\draw (3.75,0) node[anchor=north] {$x'$};
\draw (3.75,0) circle (0.75);
\draw[<->] (5,0.75) -- (5,-0.75);
\draw (5,0) node[anchor=west] {$2\rho-2d$};
\draw[<->] (0,0) -- (3.18,3.18);
\draw (1.59,1.59) node[anchor=south] {$\rho$};
\fill (-4.39,.97) circle (2pt);
\draw (-4.39,.97) node[anchor=south east] {$b$};
\draw (-4,2.2) node[anchor=east] {$Q_b$};
\draw (3.6,-3.7) node[anchor=east] {$Q_a^+$};
\draw (2.4,-4.4) node[anchor=east] {$Q_a^-$};
\draw[dashed] (-2.95,-.52) -- (0,0) -- (-4.39,.97);
\draw[dashed] (.78,-4.43) -- (0,0) -- (2.42,-3.8);
\fill (2.42,-3.8) circle (2pt);
\draw (2.38,-3.8) node[anchor=north west] {$a$};
\draw[scale=1.2] (-.98,.22) arc(167.5:190:1);
\draw (-1.2,0) node[anchor=east] {$\phi$};
\draw[scale=.266] (0.78,-4.43) arc(280:302.5:4.5);
\draw (.85,-1.5) node[anchor=east] {$\phi$};
\draw[scale=.62,<->] (0.78,-4.43) arc(280:302.5:4.5);
\draw (-1.4,-2.1) node[anchor=west] {$2\rho-2d$};
\draw[->] (.2,-2.1) arc(90:30:.9);
\draw (0.59,-3.35) arc(280:325:3.4);
\draw[<->] (0,0) -- (2.6,-2.19);
\draw (1.2,-1.15) node[anchor=south west] {$\bar s$};
\draw[line width=1] (2.4,-2.4) -- (3.18,-3.18);
\draw[line width=1] (.88,-3.28) -- (1.16,-4.35);
\draw (3.1,-1.8) node[anchor=south west] {$S_{\theta^+}$};
\draw[->] (3.3,-1.8) -- (2.84,-2.74);
\draw (0,-4) node[anchor=east] {$S_{\theta^-}$};
\draw[->] (-.15,-3.9) -- (.94,-3.84);
\end{tikzpicture}
\caption{The situation in Proposition~\ref{dist>R6}. Here $\phi$ is chosen so that the rectangular sectors at $a$ and $b$ corresponding to $\phi$ are disjoint and do not intersect the ball $B(x',\rho-d)$ at the tricolor point $x'$.}\label{FigPosDist}
\end{figure}
We start by observing that, since both $x$ and $x'$ are $3$-color points, by Lemma~\ref{noisland},
\begin{align}\label{esti2balls}
P\big(\E; B(x',\rho -d )\big) \geq 3 \tilde h_{\rm min} (\rho-d)\,, &&
P\big(\E; B(x,\rho -2(\rho-d) )\big) \geq 3 \tilde h_{\rm min} (2d-\rho)\,.
\end{align}
Among $x_1,\,x_2$ and $x_3$, let $a$ and $b$ be the two points having maximal distance from $x'$, let $\phi$ be the angle defined by the property
\begin{equation}\label{defphi}
(2d-\rho) \phi = 2\rho - 2d\,,
\end{equation}
and let $Q_a$ and $Q_b$ be the curved rectangles defined by
\begin{align*}
Q_a &= \Big\{ z \in B(x,\rho)\setminus B(x,2d-\rho):\, |\angle zxa|<\phi \Big\}\,, \\
Q_b &= \Big\{ z \in B(x,\rho)\setminus B(x,2d-\rho):\, |\angle zxb|<\phi \Big\}\,.
\end{align*}
Notice that the angles $\angle ax{x'}$ and $\angle {x'}xb$ are both at least $3\pi/10$ by~(\ref{stima119}), while $\phi<2/3\approx 0.21\pi$ since $d>4\rho/5$. As a consequence, a simple trigonometric computation ensures that, as in the figure, $Q_a$ and $Q_b$ are disjoint and they do not intersect the ball $B(x',\rho-d)$. The goal of this step is to show that
\begin{align}\label{estiQaQb}
P(\E; Q_a) \geq \tilde h_{\rm min}(2\rho-2d)\,, && P(\E; Q_b) \geq \tilde h_{\rm min}(2\rho-2d)\,.
\end{align}
Let us prove the estimate for $Q_a$, since there is no difference with the case of $Q_b$. If for almost every $2d-\rho<s<\rho$ one has $\#(\partial^*\E\cap \partial B(x,s)\cap Q_a)\geq 1$, the claim directly follows by integration.\par

Suppose then the existence of some $2d-\rho<\bar s<\rho$ such that the arc $\partial B(x,\bar s)\cap Q_a$ does not intersect $\partial^* \E$. Let us subdivide $Q_a=Q_a^+\cup Q_a^-$, where $Q_a^\pm$ are the two parts in which $Q_a$ is divided by the segment $\{z: z-x=\sigma(a-x),\, \frac{2d}\rho-1<\sigma<1\}$. For every $0<\theta<\phi$, let us now call $S_\theta$ the closed segment made by all the points $z\in Q_a^+$ such that $\angle axz=\theta$ and $\bar s\leq |z-x|\leq \rho$, and similarly for every $-\phi<\theta<0$ we call $S_\theta$ the closed segment made by all the points $z\in Q_a^-$ such that $\angle zxa=-\theta$ and $\bar s<|z-x|<\rho$. If $\#(\partial^*\E\cap S_\theta)\geq 1$ for almost every $0<\theta<\phi$, or for almost every $-\phi<\theta<0$, then again by integration and recalling~(\ref{defphi}) we obtain the searched estimate for $P(\E;Q_a)$.\par

We are then only left to consider the case when two angles $\theta^\pm$ with $-\phi<\theta^-<0<\theta^+<\phi$ exist such that both the segments $S_{\theta^+}$ and $S_{\theta^-}$ have no intersection with $\partial^*\E$. In this case, putting together the two arcs $\partial B(x,\rho)\cap \bigcup_{\theta^-\leq \theta\leq \theta^+} S_\theta$ and $\partial B(x,\bar s)\cap \bigcup_{\theta^-\leq \theta\leq \theta^+} S_\theta$, and the two segments $S_{\theta^-}$ and $S_{\theta^+}$, we obtain a Lipschitz, closed loop which intersects $\partial^*\E$ in a single point, namely, $a$. However, we can easily notice that this is impossible, because for any such loop the number of intersections with $\partial^*\E$ must be either empty, or done by at least two points. Let us be more precise: if we call $G$ the Lipschitz set whose boundary is the above-mentioned loop, we can assume without loss of generality that the Vol'pert property holds for $G$ and each of the sets $E_i,\, 1\leq i \leq m$ (this only rules out a negligible set of directions and radii). In other words, for every $1\leq i \leq m$ we have that $\partial^* E_i \cap \partial G= \partial^*(E_i\cap \partial G)$. However, $E_i\cap \partial G$ is a subset of the Lipschitz curve $\partial G$ .Therefore, its boundary is empty if this subset is either empty or coincides with the whole curve $\partial G$; and otherwise, it is done by at least two points. Since this is true for any $1\leq i \leq m$, we also get that $\partial^* \E \cap \partial G$ must be either empty, or done by at least two points. The contradiction concludes the proof of the validity of~(\ref{estiQaQb}).

\step{III}{Estimate on $P(\E;B(x,\rho))$ from below.}
In this last quick step we find an estimate of $P(\E;B(x,\rho))$ from below, which gives a contradiction with the estimate from above found in Step~I, concluding the proof. Since the curved rectangles $Q_a$ and $Q_b$ and the balls $B(x,2d-\rho)$ and $B(x',\rho-d)$ are pairwise disjoint, by~(\ref{esti2balls}), (\ref{estiQaQb}) and~(\ref{850}) we obtain
\[
P(\E;B(x,\rho)) \geq \tilde h_{\rm min} (4\rho-d)
\geq \frac{19}6 \, \tilde h_{\rm min} \rho\,,
\]
against~(\ref{fromabove}).
\end{proof}

\subsection{Interface regularity}\label{s:interface}

This section is devoted to prove the following regularity result for the boundary of the optimal cluster $\E$.

\begin{prop}[${\rm C}^{1,\gamma}$ regularity]\label{step6}
There exists an increasing function $\xi:\R^+\to\R^+$ with $\lim_{r\to 0^+} \xi(r)=0$ such that the following property holds. Let $B(\bar x,\bar r)\subseteq D$ be a ball such that
\begin{align}\label{only2colors}
\# \Big\{0\leq i\leq m:\, |E_i\cap B(\bar x,\bar r)|>0\Big\} \leq 2\,, &&
\#\, \big(\partial^* \E\cap \partial B(\bar x,\bar r)\big) <+\infty\,.
\end{align}
Then, $\partial^*\E\cap B(\bar x,\bar r)$ is a finite union of ${\rm C}^1$ pairwise disjoint relatively closed curves such that, calling $\tau(x)\in\P^1$ the direction of the tangent vector at any $x\in\partial^*\E\cap B(\bar x,\bar r)$, one has
\begin{equation}\label{uniformC1}
|\tau(y)-\tau(x)| \leq \xi(|y-x|)
\end{equation}
for every $x,\,y\in \partial^*\E\cap B(\bar x,\bar r)$. Moreover, if $\eta\beta>1$ and $h$ is locally $\alpha$-H\"older continuous, then it is possible to take $\xi(t)=K t^\gamma$ with some $K>0$ and
\begin{equation}\label{heregamma}
\gamma=\frac 12\, \min\{\eta\beta-1,\, \alpha\}\,,
\end{equation}
so that in particular $\partial^*\E\cap B(\bar x,\bar r)$ is ${\rm C}^{1,\gamma}$.
\end{prop}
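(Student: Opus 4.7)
The plan is to reduce the problem to the regularity of a single interface and then invoke a Tamanini-type theorem for almost-minimizers of the Euclidean perimeter. Since assumption~(\ref{only2colors}) guarantees that at most two of the sets $E_0,\,E_1,\,\dots,\,E_m$ have positive measure inside $B(\bar x,\bar r)$, call them $E_i$ and $E_j$, the two sets form an essential partition of the ball; hence $\partial^* E_i \cap B(\bar x,\bar r) = \partial^* E_j \cap B(\bar x,\bar r)$ up to $\H^1$-null sets, every other $E_k$ contributes nothing to the reduced cluster boundary inside the ball, and $\partial^*\E \cap B(\bar x,\bar r) = \partial^* E_i \cap B(\bar x,\bar r)$. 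The analysis therefore reduces to the regularity of the single interface $\partial^* E_i$ inside $B(\bar x,\bar r)$.

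The heart of the argument is to establish a local Euclidean almost-minimality for $E_i$. Given any $B(x,r)\comp B(\bar x,\bar r)$ small enough and any competitor set $F$ with $F\Delta E_i\comp B(x,r)$, I would form the cluster $\E'$ that agrees with $\E$ outside $B(x,r)$ and satisfies $E_i'=F$, $E_j'=B(x,r)\setminus F$ inside. Applying Lemma~\ref{labello} to correct the volume defect $\eps=|\E|-|\E'|$, which obeys $|\eps|\leq 2|B(x,r)|\leq 2\Cgr r^\eta$, and then using the minimality of $\E$ together with the bounds $\tilde h_{\rm min}\geq h_{\rm min}$ and $\tilde h_{\rm max}\leq \tilde h_{\rm min}+\omega(2r)$ in $B(x,r)$, I would deduce, in the same spirit as Lemmas~\ref{13/2} and~\ref{3points},
\begin{equation*}
\H^1(\partial^* E_i\cap B(x,r)) \leq \H^1(\partial^* F\cap B(x,r)) + \Lambda(r)\,,\qquad \Lambda(r) = C\bigl(r\,\omega(2r) + \Ceeb[\Cgr r^\eta]\,\Cgr^\beta\,r^{\eta\beta}\bigr)\,,
\end{equation*}
where the linear factor in the first summand comes from controlling $\H^1(\partial^* F\cap B(x,r))$ by a constant multiple of $r$ via elementary competitors, e.g.\ diameter cuts of the ball.

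With this Euclidean almost-minimality in hand, the rest is a standard appeal to Tamanini's theory. If $h$ is locally $\alpha$-H\"older and $\eta\beta>1$, then $\Ceeb[\cdot]$ is bounded and $\Lambda(r)\leq C\,r^{1+2\gamma}$ with $\gamma=\frac12\min\{\eta\beta-1,\,\alpha\}$ as in~(\ref{heregamma}); the classical result of~\cite{Tam} (see also~\cite[Chapter~28]{Mag}) then yields the ${\rm C}^{1,\gamma}$ regularity. In the Dini case, the $1/2$-Dini continuity of $h$ together with the $1/2$-Dini property of $t\mapsto \Ceeb[t]$ (needed only when $\eta\beta=1$) makes $\sqrt{\Lambda(r)/r}$ Dini-integrable, and the Dini variant of Tamanini's theorem produces the uniform modulus of continuity $\xi$ of the tangent vector in~(\ref{uniformC1}). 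The final decomposition into finitely many pairwise disjoint $C^1$ arcs follows by combining this pointwise regularity with the assumption $\#(\partial^*\E\cap\partial B(\bar x,\bar r))<+\infty$ and the standard density lower bound $\H^1(\partial^* E_i\cap B(x,r))\geq c\,r$, which is itself a consequence of the almost-minimality above and prevents accumulation of closed loops inside the compact set $\overline{B(\bar x,\bar r)}$.

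The main obstacle is the borderline case $\eta\beta=1$, in which the error term $\Ceeb[\Cgr r^\eta]\,r^{\eta\beta}$ is linear in $r$ and thus competes with the leading Euclidean scale. Only the quantitative smallness of $\Ceeb$ enforced by~(\ref{Cperissmall}) and the assumed $1/2$-Dini decay of $t\mapsto\Ceeb[t]$ ensure that $\Lambda(r)/r$ is $1/2$-Dini, which is exactly what Tamanini's machinery requires to deliver the $C^1$ (but not $C^{1,\gamma}$) conclusion. The matching of the exponent $\gamma=\frac12\min\{\eta\beta-1,\,\alpha\}$ in the H\"older subcase is then a direct consequence of the quantitative form of $\Lambda(r)$.
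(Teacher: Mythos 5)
Your proposal is correct in outline but follows a genuinely different route from the paper. You reduce~(\ref{only2colors}) to a single interface and show that it is an almost-minimizer of the \emph{Euclidean} perimeter with additive gauge $\zeta(r)=\Lambda(r)/r\lesssim \omega(2r)+\Ceeb[2\Cgr r^\eta]\,r^{\eta\beta-1}$, then invoke Tamanini-type regularity. The paper instead gives a self-contained planar argument: using Vol'pert, the no-island Lemma~\ref{noisland} and the volume-fixing competitor of Lemma~\ref{labello}, it proves that the two points of $\partial^*\E\cap\partial B(x,r)$ are nearly antipodal (Lemma~\ref{aa2bp}), defines chord directions $\tau(x,r)$ (Lemma~\ref{xi2}, Corollary~\ref{corsss}), and obtains the tangent as the limit of $\tau(x,r)$ through a dyadic summation that converges exactly because $\xi_1\approx\sqrt\zeta$ has the Dini property. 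Note that your gauge coincides, up to constants, with the paper's $\xi_1(r)^2$, so the quantitative content is the same and your exponent matches~(\ref{heregamma}) in the H\"older case.

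Two caveats. First, to get the error $r\,\omega(2r)$ in the almost-minimality you must bound $\H^1(\partial^* F\cap B(x,r))$ by a multiple of $r$; ``diameter cuts'' do not achieve this for an arbitrary competitor $F$, but the fix is immediate: either $\H^1(\partial^* F\cap B(x,r))\geq \H^1(\partial^*\E\cap B(x,r))$ and the inequality is trivial, or both quantities are at most $\frac{13}2\,r$ by Lemma~\ref{13/2}. Second, and more substantially, the H\"older-gauge case of your black box is indeed covered by~\cite{Tam,Mag}, but the ``Dini variant of Tamanini's theorem'' with a general gauge --- which is what you need when $\eta\beta=1$, and also when $h$ is merely $1/2$-Dini rather than H\"older --- is not available in the cited references in the precise form required here (a uniform modulus $\xi$ for the tangent, with the two-point estimate~(\ref{uniformC1}) valid also for points on a priori different components). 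Proving that statement amounts to rerunning an excess-decay/alignment iteration with a Dini gauge, which is exactly what Lemmas~\ref{aa2bp}, \ref{xi2} and the series defining $\xi_3$ carry out in the paper. So your plan buys brevity in the H\"older regime at the price of an unreferenced (though folklore) theorem in the Dini regime, where the paper's elementary argument is doing the real work.
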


\begin{lem}[Almost alignment on a circle with two boundary points]\label{aa2bp}
There exist $R_7<R_6$ and a function $\xi_1:\R^+\to\R^+$ as in Proposition~\ref{step6} and satisfying the Dini property such that the following holds. Let $B(\bar x,\bar r)$ be as in Proposition~\ref{step6}, and let $x\in\partial^*\E$ and $r<R_7$ be such that
\begin{align*}
B(x,r)\subseteq B(\bar x,\bar r)\,, &&
\partial B(x,r) \cap \partial^* \E = \{a,\,b\}
\end{align*}
where $a,\,b$ are two distinct points in $\partial^*\E$, such that $0<\angle axb\leq \pi$. Then, calling $d=\haus^1(\partial^*\E\cap B(x,r))-|b-a|$, we have
\begin{align}\label{xi1}
|\angle axb - \pi | \leq \xi_1(r)\,, &&
d \leq \frac r 6 \, \xi_1(r)^2\,.
\end{align}
\end{lem}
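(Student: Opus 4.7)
The plan is to prove both estimates in~(\ref{xi1}) simultaneously by comparing $\E$ with the competitor $\E'$ that replaces $\partial^*\E\cap B(x,r)$ with the chord $[a,b]$, and by combining the resulting upper bound on the length with the Steiner-type observation that any curve joining $a$ and $b$ through $x$ has length at least $2r$.

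For the lower bound on $\H^1(\partial^*\E\cap B(x,r))$, I would first argue that this set is essentially a connected rectifiable arc with endpoints $a,\,b$ whose closure contains $x$. This uses the two-color hypothesis~(\ref{only2colors}) to ensure that the boundary separates the two regions meeting $\partial B(x,r)$, the no-island Lemma~\ref{noisland} (applicable as soon as $R_7\leq R_4$) to rule out detached closed components, and the fact that $x\in\partial^*\E$ to preclude the arc from missing the center. The triangle inequality applied along such an arc then yields
\[
\H^1\bigl(\partial^*\E\cap B(x,r)\bigr)\geq |a-x|+|x-b|=2r\,,
\]
and, writing $|a-b|=2r\sin(\theta/2)$ with $\theta=\angle axb$,
\[
d\geq 2r\bigl(1-\sin(\theta/2)\bigr)=2r\Bigl(1-\cos\tfrac{\pi-\theta}{2}\Bigr)\geq \frac{r}{\pi^2}(\pi-\theta)^2
\]
by Jordan's inequality.

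For the upper bound on $d$, I would build $\E'$ as above and apply Lemma~\ref{labello}, used with the sharper constant $\Ceeb[|\eps|]$ (rather than $\Ceeb$) that the $\eps$-$\eps^\beta$ property itself provides, to get a cluster $\E''$ with $|\E''|=|\E|$ and $P(\E'')\leq P(\E')+\Ceeb[2\Cgr r^\eta]\,(2\Cgr r^\eta)^\beta$. Combining this with the direct pointwise perimeter comparison $P(\E)-P(\E')\geq h_{\rm min}\,d-2r\,\omega(2r)$ and the minimality of $\E$ gives
\[
d\leq \frac{r}{h_{\rm min}}\Bigl(2\,\omega(2r)+2^\beta\,\Ceeb[2\Cgr r^\eta]\,\Cgr^\beta\,r^{\eta\beta-1}\Bigr)=:r\,g(r)\,.
\]
Setting $\xi_1(r):=\pi\sqrt{g(r)}$ then delivers both claims in~(\ref{xi1}): the trigonometric lower bound on $d$ rearranges to $(\pi-\theta)^2\leq \pi^2\,d/r\leq \xi_1(r)^2$, and since $\pi^2>6$ one also has $\xi_1(r)^2/6\geq g(r)\geq d/r$.

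Finally, the Dini property of $\xi_1$ follows from the splitting $\xi_1(r)\lesssim \sqrt{\omega(2r)}+\sqrt{\Ceeb[2\Cgr r^\eta]\,r^{\eta\beta-1}}$: the first summand is Dini because $\sqrt{\omega_h}$ is, by the local $1/2$-Dini continuity of $h$; the second is a pure power $r^{(\eta\beta-1)/2}$ when $\eta\beta>1$ and reduces to $\sqrt{\Ceeb[2\Cgr r]}$ when $\eta\beta=1$, in which case it is Dini by the $1/2$-Dini assumption on $\Ceeb$ from Theorem~\ref{main}(ii). A sufficiently small choice of $R_7\leq R_6$ keeps $\xi_1(R_7)$ below $\pi$ so that the trigonometric manipulations remain in their valid range. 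The step I expect to be the most delicate is the topological one in the second paragraph: rigorously identifying $\partial^*\E\cap B(x,r)$ with a single rectifiable arc threading through $x$ requires a careful combined use of Vol'pert's theorem, the two-color constraint, and the no-island lemma to exclude detached components and to ensure that the arc actually passes through the center rather than merely separating $a$ from $b$ around it.
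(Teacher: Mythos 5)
Your competitor construction, the use of Lemma~\ref{labello} with the refined constant $\Ceeb[|\eps|]$, the resulting bound $d\leq \frac{r}{h_{\rm min}}\big(2\omega(2r)+2^\beta\Ceeb[2\Cgr r^\eta]\Cgr^\beta r^{\eta\beta-1}\big)$, the trigonometric conversion of the length deficit into an angle estimate, and the Dini/H\"older discussion of $\xi_1$ all coincide with the paper's argument (up to harmless constants: your $\xi_1=\pi\sqrt{g}$ versus the paper's $\sqrt{6g}$, both of which close the chain since $\pi^2>6$). The one genuine gap is the step you yourself flag: the lower bound $\H^1(\partial^*\E\cap B(x,r))\geq 2r$. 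You propose to obtain it by asserting that $\partial^*\E\cap B(x,r)$ is essentially a single connected rectifiable arc joining $a$ to $b$ and passing through $x$, and then using the triangle inequality. At this stage of the paper no such structure is available: $\partial^*\E$ is only known to be an $\H^1$-rectifiable set, and the decomposition of the boundary into curves is exactly what Proposition~\ref{step6} (whose proof rests on this lemma) is meant to establish, so your route is circular in spirit. Neither the two-color hypothesis~(\ref{only2colors}) nor Lemma~\ref{noisland} yields connectedness of the boundary trace in the ball, nor that any component actually passes through the center rather than merely accumulating at it; ruling out, say, several arcs or boundary pieces touching $\partial B(x,r)$ only at points of $\partial B(x,r)\setminus\partial^*\E$ would require extra work that the lemma does not need.

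The repair is simpler than what you attempt and uses precisely the tools you list, applied at every scale instead of only at scale $r$: since $x\in\partial^*\E$, for every $0<s<r$ at least two chambers $E_i$, $E_j$ have positive measure in $B(x,s)$; as $s<R_7\leq R_6\leq R_4$, Lemma~\ref{noisland} gives $\H^1(E_i\cap\partial B(x,s))>0$ and $\H^1(E_j\cap\partial B(x,s))>0$, so by Vol'pert's Theorem~\ref{volpert} one has $\#\big(\partial^*\E\cap\partial B(x,s)\big)\geq 2$ for a.e. $s\in(0,r)$, and the coarea formula yields $\H^1(\partial^*\E\cap B(x,r))\geq 2r$, hence $d\geq 2r-|b-a|$, with no connectedness claim needed. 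With this substitution your proof is complete; as a minor point, you may also want to record explicitly (as the paper does) that when $\eta\beta>1$ and $h$ is locally $\alpha$-H\"older one gets $\xi_1(r)\lesssim r^\gamma$ with $\gamma$ as in~(\ref{heregamma}), since this quantitative form of $\xi_1$ is used later in Lemma~\ref{xi2} and Proposition~\ref{step6}.
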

\begin{proof}
We define $\E'$ as the cluster which coincides with $\E$ outside of $B(x,r)$, and such that $\partial \E'\cap B(x,r)$ is done by the segment $ab$. We let $d=\haus^1(\partial^*\E\cap B(x,r))-|b-a|$. Observe that, since $x\in \partial^*\E$, the fact that $r<R_6$ together with Lemma~\ref{noisland} ensures that for almost each $0<s<r$ the set $\partial^*\E\cap \partial B(x,s)$ is non-empty, thus by Vol'pert Theorem it contains at least $2$ points. As a consequence, by coarea formula, $\haus^1(\partial\E'\cap B(x,r))\geq 2r$, hence $d\geq 2r-|b-a|$. Setting $\tilde h_{\rm min}$ and $\tilde h_{\rm max}$ as usual, we have
\[
P(\E')-P(\E) \leq \tilde h_{\rm max} |b-a| - \tilde h_{\rm min} (|b-a|+d)\,.
\]
As a consequence, minding that $\E\Delta \E' \subseteq B(x,r)$ and $|B(x,r)|\leq \Cgr r^\eta$, by Lemma~\ref{labello} --notice that in~(\ref{proplab}) one can clearly use $\Ceeb[|\eps|]$ in place of $\Ceeb$-- we readily obtain
\[
d\leq \frac 1{h_{\rm min}}\, \Big(2 r \omega(2r) + \Ceeb[2\Cgr r^\eta] (2\Cgr r^\eta)^\beta\Big)\,.
\]
Let us now set
\[
\xi_1(r) = \bigg(\frac 6{h_{\rm min}}\, \Big(2\omega(2r) + 2^\beta \Ceeb[2\Cgr r^\eta] \Cgr^\beta r^{\eta\beta-1}\Big)\bigg)^{1/2}\,,
\]
so that the right estimate in~(\ref{xi1}) holds true. The left one then easily follows since
\[
\frac r6\, \xi_1(r)^2 \geq d \geq 2r - |b-a| = 2r \Big(1 - \sin\big(\angle axb/2\big) \Big) 
\geq \frac r6\, (\angle axb - \pi)^2\,.
\]
Hence, to conclude we only have to check the properties of $\xi_1$. The fact that $\xi_1$ is increasing is true by construction, and the fact that, as $r\searrow 0$, it goes to $0$ is true since $\omega(r)\searrow 0$, and $r^{\eta\beta-1}\searrow 0$ if $\eta\beta>1$, while $\Ceeb[2\Cgr r^\eta]\searrow 0$ if $\eta\beta=1$ by assumption. In addition, since $r^\eta\leq r$ because $\eta\geq 1$, then
\[
\xi_1(r) \lesssim \sqrt{\omega(2r) + \Ceeb[2\Cgr r^\eta] r^{\eta\beta-1}}
\lesssim \sqrt{\omega(2r)} + \sqrt{\Ceeb[2\Cgr r] r^{\eta\beta-1}}\,.
\]
The Dini property of $\xi_1$ then readily follows. Indeed, the Dini property of $\sqrt{\omega(2r)}$ is true since by assumption $h$ is locally $1/2$-Dini continuous. Moreover, the Dini property of $\sqrt{\Ceeb[2\Cgr r] r^{\eta\beta-1}}$ is clear if $\eta\beta>1$, while it comes from the $1/2$-Dini property of $t\mapsto\Ceeb[t]$ if $\eta\beta=1$. Finally, if $\eta\beta>1$ and $h$ is locally $\alpha$-H\"older then we have
\[
\xi_1(r) \lesssim \sqrt{\omega(2r) + r^{\eta\beta-1}} \lesssim \sqrt{r^\alpha + r^{\eta\beta-1}}\approx r^\gamma\,,
\]
with $\gamma$ given by~(\ref{heregamma}).
\end{proof}

\begin{lem}[Almost alignment on every circle]\label{xi2}
There exists a function $\xi_2:\R^+\to\R^+$ as in Lemma~\ref{aa2bp} such that the following holds. Let $B(\bar x,\bar r)$ be as in Proposition~\ref{step6}, and let $x\in\partial^*\E$ and $r<R_7/(2C_2)$ be such that $B(x,2C_2 r)\subseteq B(\bar x,\bar r)$. Then, there exists a direction $\tau(x,r)\in\P^1$ such that every $y\in\partial^*\E\cap\partial B(x,r)$ satisfies
\begin{equation}\label{alignmenteveryc}
|\zeta(y-x)- \tau(x,r)| \leq \xi_2(r)\,,
\end{equation}
where $\zeta(v)=\big[ \, v/|v|\,\big] \in\P^1$ is the direction of any vector $v\in\R^2\setminus \{0\}$. 
Moreover, for every $r' \in [r/2,r)$
\begin{equation}\label{taucont}
|\tau(x,r)-\tau(x,r')| \leq 2 C_2\xi_1(2C_2 r)+2\xi_2(r)\,.
\end{equation}
\end{lem}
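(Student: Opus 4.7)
The plan is to define $\tau(x,r)$ by applying Lemma~\ref{aa2bp} at a slightly larger scale $\rho\in(2r,2C_2 r)$ on which only two boundary points live, and to then transfer the resulting almost-antipodality down to $\partial B(x,r)$ via a quantitative ``ellipse'' length-excess estimate.

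\emph{Step~1 (Definition of $\tau(x,r)$).} I would apply Lemma~\ref{3points} to the ball $B(x,2C_2 r)\subseteq B(\bar x,\bar r)$, getting some $\rho\in(2r,2C_2 r)$ with $\#(\partial^*\E\cap\partial B(x,\rho))\leq 3$. Under the two-color hypothesis~(\ref{only2colors}), Theorem~\ref{volpert} forces the arcs of $\E\cap\partial B(x,\rho)$ to alternate between the two present colors, so this cardinality is \emph{even}; since $x\in\partial^*\E$, Lemma~\ref{noisland} excludes the value $0$. Hence $\{a,b\}:=\partial^*\E\cap\partial B(x,\rho)$ consists of exactly two points, and Lemma~\ref{aa2bp} yields both $|\angle axb-\pi|\leq\xi_1(\rho)$ and the excess-length bound $d:=\H^1(\partial^*\E\cap B(x,\rho))-|b-a|\leq \frac{\rho}{6}\xi_1(\rho)^2$. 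Define $\tau(x,r):=\zeta(b-a)$.

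\emph{Step~2 (Alignment on $\partial B(x,r)$).} Fix $y\in\partial^*\E\cap\partial B(x,r)$ and set $\Sigma:=\partial^*\E\cap B(x,\rho)$. The key topological claim is that the connected component of $\Sigma$ containing $y$ is a single rectifiable arc joining $a$ and $b$: any other possibility (a closed loop, or an arc with both endpoints coinciding among $\{a,b\}$) would enclose an ``island'' of one color inside the other, contradicting Remark~\ref{noislandG}. This gives $|a-y|+|y-b|\leq \H^1(\Sigma)\leq |b-a|+d$. Choosing orthonormal coordinates with $x=0$ and $b-a$ along the first axis, and writing $y=r(\cos\theta,\sin\theta)$, a direct expansion of the ellipse inequality becomes $r^2\sin^2\theta\lesssim d\,\rho\leq \frac{\rho^2}{6}\xi_1(\rho)^2$, hence $|\sin\theta|\lesssim (\rho/r)\xi_1(\rho)\leq 2C_2\,\xi_1(2C_2 r)$. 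The $\P^1$-distance between $\zeta(y-x)$ and $\tau(x,r)$ being controlled by $|\sin\theta|$, estimate~(\ref{alignmenteveryc}) follows with $\xi_2(r):=C\,\xi_1(2C_2 r)$ for a harmless universal constant $C$; the Dini/H\"older properties of $\xi_2$ are inherited from those of $\xi_1$ proved in Lemma~\ref{aa2bp}.

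\emph{Step~3 (Continuity in $r'$).} For $r'\in[r/2,r)$ I would reuse the same $\rho$ defined in Step~1: since $\rho>2r\geq 2r'$ one has $B(x,r')\subseteq B(x,\rho)$, and the ratio $\rho/r'\leq 4C_2$ degrades the constants in the Step~2 argument by at most a factor~$2$. Picking any $y'\in\partial^*\E\cap\partial B(x,r')$ (which exists by Lemma~\ref{noisland} applied at radius $r'$), both $\tau(x,r)=\zeta(b-a)$ and $\tau(x,r')$ are then close to $\zeta(y'-x)$, and the triangle inequality immediately yields~(\ref{taucont}).

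\emph{Main obstacle.} The only delicate point is the topological claim in Step~2 ensuring $\H^1(\Sigma)\geq |a-y|+|y-b|$. This ``no-spurious-components'' fact is exactly what bridges the variational input from Lemmas~\ref{3points} and~\ref{aa2bp} to the geometric conclusion that $y$ lies near the chord $ab$. It reduces to Remark~\ref{noislandG} via a routine Jordan-curve argument that exploits the two-color hypothesis~(\ref{only2colors}); once handled, the remainder of the proof is elementary trigonometry.
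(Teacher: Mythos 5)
Your Steps~1 and~3, and the reduction of~(\ref{alignmenteveryc}) to the single inequality $\H^1(\partial^*\E\cap B(x,\rho)) \ge |y-a|+|y-b|$, follow essentially the same route as the paper: same choice of $\rho\in(2r,2C_2r)$ via Lemma~\ref{3points}, same parity argument via~(\ref{only2colors}), Vol'pert Theorem~\ref{volpert} and Lemma~\ref{noisland}, same chord direction $\tau(x,r)=\zeta(b-a)$. The genuine gap is exactly the point you flag as the ``main obstacle'': the claim that the connected component of $\Sigma=\partial^*\E\cap B(x,\rho)$ containing $y$ is a rectifiable arc joining $a$ to $b$. At this stage $\partial^*\E$ is only known to be a rectifiable set of locally finite $\H^1$-measure; no arc structure and no nontrivial connectivity are available --- the component of $\Sigma$ through the specific point $y$ could a priori be $\{y\}$ itself, since $y\in\partial^*\E$ only encodes density (blow-up) information at $y$, not topological information. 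Making your claim rigorous would require something like the decomposition of the boundary of a planar set of finite perimeter into countably many rectifiable Jordan curves, together with a version of Remark~\ref{noislandG} for the (generally non-Lipschitz) regions such curves enclose, and an argument excluding loops through $a$ or $b$; this is far from a ``routine Jordan-curve argument'', and it is precisely the kind of structural information the whole section is meant to establish, so the reasoning risks circularity.

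The paper obtains the same length inequality purely measure-theoretically, and you should replace your topological claim by that device: setting $d_1=|y-a|\le d_2=|y-b|$, for $0<s<d_2$ apply Remark~\ref{noislandG} to the Lipschitz domain $G_s=B(y,s)\cap B(x,\rho)$; since only two colors are present and $y\in\partial^*\E$, both colors meet $G_s$ in positive measure, hence $\partial G_s\cap\partial^*\E$ contains at least two points, and since $\partial^*\E\cap\partial B(x,\rho)=\{a,b\}$ these points lie on $\partial B(y,s)\cap B(x,\rho)$ (both of them if $s<d_1$, at least one if $d_1<s<d_2$); integrating in $s$ by coarea gives $\H^1(\partial^*\E\cap B(x,\rho))\ge d_1+d_2$. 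With this substitution your Step~2 goes through, up to one small omission: the chord $ab$ does not pass through $x$, so the ellipse computation must also account for the distance from $x$ to the chord, which is at most $\rho\sin\bigl(\xi_1(\rho)/2\bigr)$ by the angle estimate of Lemma~\ref{aa2bp}; since this is of the same order $\rho\,\xi_1(\rho)$ as the semi-minor axis $\sqrt{d\rho}$, the final bound $|\zeta(y-x)-\tau(x,r)|\lesssim (\rho/r)\,\xi_1(\rho)$ survives. Your Step~3 (reusing the circle of radius $\rho$ for points on $\partial B(x,r')$ and concluding by the triangle inequality) is a legitimate, slightly more direct variant of the paper's proof of~(\ref{taucont}), and the resulting constants can be absorbed by enlarging the constant in the definition of $\xi_2$.
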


\begin{proof}
Since $r<R_7/(2C_2)<R_3/(2C_2)$, by Lemma~\ref{3points} there exists some $2r< \rho < 2C_2 r$ such that $\partial B(x,\rho)\cap\partial^*\E$ contains at most three points. We claim that these points are actually $2$. In fact, since $x\in \partial^* \E$ and $\rho<R_4$ then there must be at least two such points by Lemma~\ref{noisland}. On the other hand, in view of~(\ref{only2colors}) and again by Lemma~\ref{noisland}, we obtain that $\#\big\{0\leq i \leq m:\, \haus^1(E_i\cap \partial B(x,\rho))>0\big\}\leq 2$, so that the number of points of $\partial B(x,\rho)\cap\partial^*\E$ must be even (keep in mind that, as always, Vol'pert Theorem holds for $B(x,\rho)$). The claim is then proved, and we can then call $a$ and $b$ these two points, and define $\tau(x,r)\in\P^1$ the direction of the segment $ab$. Notice that the vector $\tau(x,r)$ depends on $x$, on $r$, and on the choice of $2r<\rho<2C_2 r$. The vector $\tau(x)$ of Proposition~\ref{step6}, instead, will only depend on $x$, as one can clearly deduce from~(\ref{uniformC1}).\par

Let now $y\in\partial^* \E\cap \partial B(x,r)$ be given, let us call $d_1=|y-a|$ and $d_2=|y-b|$ and assume, without loss of generality, that $d_1\leq d_2$. For every $0<s<d_2$, we call $G_s=B(y,s)\cap B(x,\rho)$. Since $y\in\partial^* \E$, by Lemma~\ref{noisland} and keeping in mind also Remark~\ref{noislandG}, we obtain that $\Gamma_s:=\partial G_s\cap\partial^* \E$ contains at least two points. Since $s<d_2$, $\Gamma_s$ cannot contain $b$, and it cannot contain $a$ if $s<d_1$. Recalling that $\partial B(x,\rho)\cap\partial^* \E=\{a,\,b\}$, we deduce that $\Gamma_s\cap B(x,\rho)$ contains at least two points for $0<s<d_1$, and at least one point for $d_1<s<d_2$. By construction, this implies that
\[
\haus^1 \big(\partial^* \E \cap B(x,\rho)\big) \geq d_1 + d_2\,.
\]
As usual, we define the cluster $\E'$ coinciding with $\E$ outside of $B(x,\rho)$ and such that $\partial\E'\cap B(x,\rho)$ is given by the segment $ab$, so by Lemma~\ref{labello} we readily obtain
\[
h_{\rm min}\big(d_1+d_2 - |a-b|\big) \leq 2 \rho \omega(2\rho) + \Ceeb[2\Cgr \rho^\eta] (2 \Cgr \rho^\eta)^\beta\,.
\]
Keeping in mind that $|y-x|=r$ while $|a-x|=|b-x|=\rho>2r$, arguing exactly as in Lemma~\ref{aa2bp} we find a function $\tilde\xi$ which satisfies the Dini property and such that $|\angle ayb-\pi| \leq \tilde\xi(r)$. In addition, $\tilde\xi(r)\lesssim r^\gamma$ if $\eta\beta>1$ and $h$ is locally $\alpha$-H\"older, with $\gamma$ given by~(\ref{heregamma}). Moreover, Lemma~\ref{aa2bp} already gives that $|\angle axb-\pi|\leq \xi_1(\rho)\leq \xi_1(2C_2r)$.
\begin{figure}[th]
\begin{tikzpicture}[>=>>]
\draw[line width=1] (0,0) circle (2);
\draw[line width=1] (0,0) circle (4.5);
\fill (0,0) circle (2pt);
\draw (0,0) node[anchor=north east] {$x$};
\fill (-4.47,.5) circle (2pt);
\draw (-4.47,.5) node[anchor=south east] {$a$};
\fill (4.47,.5) circle (2pt);
\draw (4.47,.5) node[anchor=south west] {$b$};
\fill (-1.73,1) circle (2pt);
\draw (-1.73,1) node[anchor=south east] {$y$};
\draw[<->] (-1.53,1) -- (-1.53,.5);
\draw (-1.53,.75) node[anchor=west] {$\delta_y$};
\draw[<->] (.2,0) -- (.2,.5);
\draw (.2,.25) node[anchor=west] {$\delta_x$};
\draw (4,2.2) node[anchor=west] {$\partial B(x,\rho)$};
\draw (1.2,1.7) node[anchor=west] {$\partial B(x,r)$};
\draw[dashed] (-4.47,.5)--(4.47,.5);
\draw (-4.47,.5) -- (-1.73,1) -- (4.47,0.5);
\draw (-2.8,.75) node[anchor=south east] {$d_1$};
\draw (3.2,.6) node[anchor=south east] {$d_2$};
\end{tikzpicture}
\caption{The situation in Lemma~\ref{xi2} to define the map $\xi_2$.}\label{FigLem215}
\end{figure}
We are then in position to find a function $\xi_2$, satisfying the Dini property and with the same additional features as $\tilde\xi$, for which~(\ref{alignmenteveryc}) is true. Let us give the appropriate definition of $\xi_2$, with the aid of Figure~\ref{FigLem215} which depicts the situation. First of all, let us notice that
\begin{align*}
\big|\angle xab\big| = \frac{\big| \angle axb- \pi\big|}2 \leq \xi_1(2C_2 r)\,, &&
\big|\angle yab\big| \leq\big| \angle ayb- \pi\big| \leq \tilde\xi(r)\,.
\end{align*}
Calling then, as in the figure, $\delta_x$ and $\delta_y$ the distances of the points $x$ and $y$ from the segment $ab$, we have
\[
\delta_x = \rho  \sin\big(\big|\angle xab\big|\big) \leq 2 C_2 r \big|\angle xab\big| \leq 2 C_2 r \xi_1(2C_2 r)\,,
\]
and similarly
\[
\delta_y = d_1 \sin\big(\big|\angle yab\big|\big) \leq (\rho+r) \big|\angle yab\big| \leq 3 C_2 r \tilde\xi(r)\,.
\]
Now, keep in mind that $\zeta(y-x)$ and $\tau(x,r)$ are the directions of the vectors $y-x$ and $b-a$ respectively. Hence, by construction we have that
\[
|y-x| \sin\big(\big|\zeta(y-x)-\tau(x,r)\big|\big) \leq \delta_x + \delta_y\,,
\]
in particular the equality holds if $y$ and $x$ are on the opposite side of the segment $ab$, as in the figure, while otherwise the left hand side term equals $|\delta_x-\delta_y|\leq \delta_x+\delta_y$. Putting together the last three inequalities, we finally have
\[
|\zeta(y-x)- \tau(x,r)| \leq \frac \pi 2 \,  \sin\big(\big|\zeta(y-x)-\tau(x,r)\big|\big) \leq \frac{\pi(\delta_x+\delta_y)}{2r}
\leq 5C_2 \big(\xi_1(2C_2 r)+\tilde\xi(r)\big)\,,
\]
and so we obtain the searched function $\xi_2$ satisying~(\ref{alignmenteveryc}) by defining
\[
\tilde\xi(r) = 5C_2 \big(\xi_1(2C_2 r)+\tilde\xi(r)\big)\,.
\]
To conclude the proof, we only have to establish~(\ref{taucont}). By Lemma~\ref{aa2bp} we have
\[
\haus^1(\partial^* \E \cap B(x,\rho)) - 2\rho \leq \haus^1(\partial^* \E \cap B(x,\rho)) - |b-a| \leq \frac \rho 6\, \xi_1(\rho)^2\,,
\]
and, since $\haus^1(\partial^*\E\cap B(x,\rho))-\haus^1(\partial^*\E\cap B(x,r))\ge 2(\rho-r)$, this implies that
\begin{equation}\label{latefio}
\haus^1(\partial^*\E \cap B(x,r)) \leq 2r + \frac \rho 6\, \xi_1(\rho)^2\,.
\end{equation}
Let now $r'\in [r/2,r)$, and let $z\in\partial B(x,r')\cap\partial^*\E$. Let us call $y$ the point of $\partial B(x,r)\cap \partial^* \E$ which is closest to $z$. Then, calling for brevity $\theta=\zeta(z-x)-\zeta(y-x)$, since $r'\geq r/2$ it is 
\[
\haus^1(\partial^*\E \cap B(x,r))-2r \geq r\, \Big(\sqrt{1+\sin^2\theta}-1\Big) \geq \frac{\theta^2}6\, r\,.
\]
By~(\ref{latefio}) and~(\ref{alignmenteveryc}), we have then
\[
|\zeta(z-x)-\tau(x,r)| \leq \sqrt{2C_2}\xi_1(\rho) +\xi_2(r)\leq \sqrt{2C_2}\xi_1(2C_2 r) +\xi_2(r)\,.
\]
To conclude it is then enough to apply~(\ref{alignmenteveryc}) with $r'$ in place of $r$ and $z$ in place of $y$, finally finding
\[
|\tau(x,r')-\tau(x,r)| \leq |\zeta(z-x)-\tau(x,r') | + |\zeta(z-x)-\tau(x,r)|\leq \sqrt{2C_2}\xi_1(2C_2 r) +\xi_2(r) + \xi_2(r')\,,
\]
which is stronger than~(\ref{taucont}).
\end{proof}

\begin{cor}\label{corsss}
Let $B(\bar x,\bar r)$ be as in Proposition~\ref{step6} and let $x,\,y\in\partial^*\E$ be such that $r:=|y-x|<R_7/(2C_2)$ and $B(x,2C_2 r)\cup B(y,2C_2 r)\subseteq B(\bar x,\bar r)$. Then,
\[
|\tau(x,r) - \tau(y,r) | \leq 2\xi_2(r)\,.
\]
\end{cor}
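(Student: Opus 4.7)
\medskip

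\noindent\textbf{Proof proposal.} The plan is to deduce this essentially for free from Lemma~\ref{xi2} by exploiting the symmetry of the configuration together with the identification of $\pm v$ in the projective space $\P^1$.

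First I would observe that the hypotheses of Lemma~\ref{xi2} are satisfied at both points: indeed $r<R_7/(2C_2)$, and by assumption $B(x,2C_2 r)\subseteq B(\bar x,\bar r)$ and $B(y,2C_2 r)\subseteq B(\bar x,\bar r)$, so the lemma applies to both $x$ and $y$ at scale $r$, producing the directions $\tau(x,r),\tau(y,r)\in\P^1$.

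Next, the key geometric observation: since $|y-x|=r$, the point $y$ lies on $\partial B(x,r)$, and $y\in\partial^*\E$ by hypothesis; thus $y\in\partial^*\E\cap\partial B(x,r)$. Applying~(\ref{alignmenteveryc}) of Lemma~\ref{xi2} at the center $x$ with this point $y$ yields
\[
|\zeta(y-x)-\tau(x,r)|\le \xi_2(r).
\]
Symmetrically, $x\in\partial^*\E\cap\partial B(y,r)$, so applying~(\ref{alignmenteveryc}) at the center $y$ with the point $x$ gives
\[
|\zeta(x-y)-\tau(y,r)|\le \xi_2(r).
\]
Crucially, $\zeta(x-y)=\zeta(y-x)$ since these are elements of the projective space $\P^1$ (which identifies antipodal directions). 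Combining these two estimates by the triangle inequality on $\P^1$ gives
\[
|\tau(x,r)-\tau(y,r)|\le |\tau(x,r)-\zeta(y-x)|+|\zeta(x-y)-\tau(y,r)|\le 2\xi_2(r),
\]
which is the claim. I expect no real obstacle here: everything reduces to reading Lemma~\ref{xi2} twice and using that $\P^1$ identifies $v$ with $-v$; the work was already done in establishing Lemma~\ref{xi2}.
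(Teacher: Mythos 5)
Your argument is correct and is exactly the paper's proof: apply~(\ref{alignmenteveryc}) at both centers, note that $y\in\partial^*\E\cap\partial B(x,r)$ and $x\in\partial^*\E\cap\partial B(y,r)$, use that $\zeta(y-x)=\zeta(x-y)$ in $\P^1$, and conclude by the triangle inequality. No gaps.
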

\begin{proof}
It is possible to apply Lemma~\ref{xi2} both to $x$ and $y$. Then, (\ref{alignmenteveryc}) gives that the direction $\zeta(y-x)$ of the vector $y-x$ differs at most $\xi_2(r)$ from both $\tau(x,r)$ and $\tau(y,r)$. The thesis is then obvious.
\end{proof}

We are now in position to prove Proposition~\ref{step6}.

\begin{proof}[Proof (of Proposition~\ref{step6}).]
We let $\xi_1$ and $\xi_2$ be the functions defined in Lemmas~\ref{aa2bp} and~\ref{xi2}. For every $x\in \partial^*\E\cap B(\bar x,\bar r)$, it is possible to apply Lemma~\ref{xi2} for every $r$ small enough. For every such $r$, taking in account~(\ref{taucont}), by obvious induction we get that for every $n\in\N$ and every $r'\in [r/2^n,r/2^{n-1})$ one has
\[
|\tau(x,r)-\tau(x,r')| \leq 2 C_2 \sum_{j=0}^{n-1} \xi_1(2C_2 r/2^j) + 2\sum_{j=0}^{n-1} \xi_2(r/2^j)\,.
\]
Let us then define
\[
\xi_3(r) = 2C_2 \sum_{j=0}^{+\infty} \xi_1(2C_2 r/2^j) + 2\sum_{j=0}^{+\infty} \xi_2(r/2^j)\,.
\]
Notice that the series converges since the functions $\xi_1$ and $\xi_2$ have the Dini property. Moreover, if $\eta\beta>1$ and $h$ is locally $\alpha$-H\"older, then both $\xi_1$ and $\xi_2$ are bounded by a multiplicative constant (only depending on $h_{\rm min},\, \beta,\, \eta,\, \omega,\,\Ceeb$ and $\Cgr$) times $r^\gamma$, with $\gamma$ given by~(\ref{heregamma}). Hence, not only the series converges, but also $\xi_3(r)\leq K r^\gamma$ with a constant $K$ only depending on the data. \par
As a consequence, we obtain that $\tau(x,r')$ converges to a direction $\tau(x)\in\P^1$ for $r'\searrow 0$, and that $|\tau(x,r)-\tau(x)|\leq \xi_3(r)$. For every $x,\,y\in B(\bar x,\bar r)$ as in Corollary~\ref{corsss}, then, we deduce that, calling $r=|y-x|$, one has
\[
|\tau(x)-\tau(y)|\leq 2\xi_3(r) + 2\xi_2(r)\,.
\]
We can finally set $\xi(r)=2\xi_3(r) + 2\xi_2(r)$. Summarizing, we have shown that for \emph{every} $x\in\partial^*\E\cap B(\bar x,\bar r)$ the normal vector to $\partial^*\E$ at $x$ exists, and is orthogonal to $\tau(x)$. The above estimate, also keeping in mind~(\ref{only2colors}), ensures then that $\partial^*\E$ is a finite union of ${\rm C}^1$ curves.
\end{proof}

\begin{cor}[Single ${\rm C}^1$ curve]\label{step6+}
Let $B(\bar x,\bar r)\subseteq D$ be a ball as in Proposition~\ref{step6}, with the additional assumption that $\bar r< R_4$ and that $\#\, \big(\partial^* \E\cap \partial B(\bar x,\bar r)\big) =2$. Then, $\partial\E \cap B(\bar x,\bar r)$ is a ${\rm C}^1$ relatively closed curve, having both endpoints on $\partial B(\bar x,\bar r)$.
\end{cor}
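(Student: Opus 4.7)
The plan is to derive Corollary~\ref{step6+} directly from Proposition~\ref{step6} together with the no-islands lemma (Remark~\ref{noislandG}), turning the multi-curve picture into a single arc via a topological classification and a minimality argument.

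First, I would invoke Proposition~\ref{step6}, which applies because the assumptions of Corollary~\ref{step6+} are stronger than those of the proposition. This gives that $\partial^*\E\cap B(\bar x,\bar r)$ is a finite, pairwise disjoint union of relatively closed ${\rm C}^1$ curves. Each such connected component $\gamma$ is a ${\rm C}^1$ embedded $1$-manifold with boundary, relatively closed in $B(\bar x,\bar r)$; hence $\gamma$ must fall in exactly one of two topological types: either (a) a closed loop entirely contained in $B(\bar x,\bar r)$, or (b) a simple arc whose two endpoints lie on $\partial B(\bar x,\bar r)$. The option of a component ending in the interior is ruled out by relative closedness combined with the ${\rm C}^1$ tangent bound from Proposition~\ref{step6}. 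The hypothesis $\#\big(\partial^*\E\cap \partial B(\bar x,\bar r)\big)=2$ then forces the total number of type-(b) components to be exactly one: call it $\gamma_0$.

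The heart of the proof is to exclude type-(a) components. Suppose by contradiction that some closed loop $\gamma$ is a component. By the Jordan curve theorem, $\gamma$ encloses an open region $G\subseteq B(\bar x,\bar r)$ with ${\rm C}^1$ (in particular Lipschitz) boundary $\partial G=\gamma$. Since $|G|>0$ and by~(\ref{only2colors}) at most two of the sets $E_0,\ldots,E_m$ meet $B(\bar x,\bar r)$ with positive measure, we must have $|E_i\cap G|>0$ for some such color $E_i$. Because $\bar r<R_4$, Remark~\ref{noislandG} applies to the Lipschitz set $G$, forcing $\H^1(E_i\cap \partial G)>0$. However $\partial G=\gamma\subseteq \partial^*\E\subseteq \partial^* E_i$, and choosing the canonical representative of $E_i$ made of its density-$1$ points makes $E_i\cap \partial^*E_i=\emptyset$, so $\H^1(E_i\cap \gamma)=0$. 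This contradiction rules out any closed loop.

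Thus $\partial^*\E\cap B(\bar x,\bar r)$ reduces to the single component $\gamma_0$, which is a relatively closed ${\rm C}^1$ arc with both endpoints on $\partial B(\bar x,\bar r)$, as required. The main point requiring care is precisely the no-loop step: one must make sure Remark~\ref{noislandG} can be invoked (which needs $\bar r\leq R_4$, already in the hypothesis) and that a canonical $\H^1$-meaningful representative of $E_i$ is fixed, so that the density-$1/2$ nature of $\partial^*E_i$ yields the desired contradiction. Everything else is routine topology once Proposition~\ref{step6} is in hand.
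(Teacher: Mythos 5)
Your argument is essentially the paper's own proof: invoke Proposition~\ref{step6}, observe that each relatively closed component is either a closed loop or an arc with both endpoints on $\partial B(\bar x,\bar r)$, exclude loops via the no-island property (Lemma~\ref{noisland} and Remark~\ref{noislandG}, using $\bar r<R_4$), and conclude uniqueness of the arc from the two boundary points; you merely spell out the loop-exclusion step in more detail. One small inaccuracy: the inclusion $\partial^*\E\subseteq\partial^*E_i$ is not literally true ($\partial^*\E$ is the union of all the $\partial^*E_j$), but your conclusion stands since $\H^1$-a.e.\ point of the loop is a density-$1/2$ point for exactly two chambers and a density-$0$ point for the others, hence never a density-one point of $E_i$, which is exactly what the contradiction with Remark~\ref{noislandG} requires.
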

\begin{proof}
Proposition~\ref{step6} already tells us that $\partial\E\cap B(\bar x,\bar r)$ is a finite union of pairwise disjoint relatively closed ${\rm C}^1$ curves. Every such curve cannot have an endpoint inside the ball $B(\bar x,\bar r)$, hence it is either a closed loop or a curve with both endpoints in $\partial B(\bar x,\bar r)$. On the other hand, a closed loop can be excluded since $\bar r<R_4$ thanks to Lemma~\ref{noisland} and Remark~\ref{noislandG}. Consequently, every curve has two endpoints in $\partial B(\bar x,\bar r)$ and, since there are only two points in $\partial^* \E\cap \partial B(\bar x,\bar r)$, we deduce that the curve is unique.
\end{proof}

\subsection{Conclusion}

In this short section we can now give the proof of Theorem~\ref{main}, which basically consists in putting together the technical results of the preceding sections.

\begin{proof}[Proof of Theorem~\ref{main}]
Let $\E\subseteq\R^2$ be a minimal cluster, and let us fix two large, closed balls $D^-\comp D\subseteq\R^2$.

Let $x\in D^-\cap\partial\E$ be any $3$-color point in the boundary of $\E$ (there are finitely many of these points by Proposition~\ref{dist>R6}). Then, by Lemma~\ref{9/10} there is some radius $r(x)<R_6$ such that the ball $B(x,r(x))$ is compactly contained in $D$, and its boundary contains exactly three points in $\partial^*\E$.\par

Let instead $x\in D^-\cap\partial\E$ be any point in the boundary of $\E$ which is not a $3$-color point. Then, by definition and by Lemma~\ref{3points} there is some radius $r(x)<R_6$ such that the ball $B(x,r(x))$ is compactly contained in $D$, has non-negligible intersection with at most $2$ sets $E_i$ with $0\leq i\leq m$, and its boundary contains exactly two points in $\partial^*\E$ (in principle there could be at most three such points, but as already noticed in the proof of Lemma~\ref{xi2} they are necessarily $2$).\par

By compactness, we can cover $D^-$ with finitely many balls $B_j=B(x_j,r_j)$, having radii $r_j<R_6$ and with the following property. For every $j$, either $x_j$ is a $3$-color point and $\partial B_j\cap\partial^* \E$ is done by three points, or $x_j$ is not a $3$-color point, the ball $B_j$ has non-negligible intersection with at most two different sets $E_i,\, 0\leq i\leq m$, and $\partial B_j\cap\partial^* \E$ is done by two points.

In the second case, by Proposition~\ref{step6} and Corollary~\ref{step6+} we know that $\partial^*\E\cap B_j$ is done by a ${\rm C^1}$ curve whose tangent vector $\tau$ satisfies the uniform estimate~(\ref{uniformC1}).

Let us then consider a ball $B_j$ centered at a $3$-color point, and let $a$ be one of the three points of $\partial B_j\cap\partial^* \E$. The point $a$ is not a $3$-color point, by Proposition~\ref{dist>R6}. Hence, a small ball centered in $a$ has non-negligible intersection with only two different sets $E_i$, so using again Lemma~\ref{3points} and Corollary~\ref{step6+} we obtain that $\partial^*\E$ is a uniformly ${\rm C}^1$ curve near $a$. The same of course holds near $b$ and $c$, the other two points of $\partial B_j\cap\partial^* \E$.\par

Therefore, there are three maximal (with respect to the inclusion) uniformly ${\rm C}^1$ curves in $B_j\cap\partial^* \E$, having one endpoint respectively in $a,\,b,\,c$. Since, as just observed, $\partial^*\E$ is a ${\rm C}^1$ curve around each point which is not a $3$-color point, by maximality the second endpoint of each of the three curves must be a $3$-color point inside $B_j$ (keep in mind that the curves have finite length since $\E$ is a minimal cluster). This means that the three curves meet at $x_j$, which is the only $3$-color point in $B_j$. Keeping in mind the uniform ${\rm C}^1$ property of the curves, given by~(\ref{uniformC1}), we deduce that the three curves arrive with a well-defined tangent vector at $x_j$. In other words, $\partial^*\E\cap B_j$ contains three ${\rm C}^1$ curves starting at $a,\,b$ and $c$ and meeting at $x_j$ arriving with three tangent vectors. By Lemma~\ref{noisland} and Remark~\ref{noislandG}, $\partial^*\E\cap B_j$ cannot have other points except these three curves. Finally, the fact that the tangent vectors at $x_j$ form three angles of $\frac 23\,\pi$ is an immediate consequence of Lemma~\ref{steiner}.\par

The fact that, if $\eta\beta>1$ and $h$ is locally $\alpha$-H\"older, then the arcs are not only ${\rm C}^1$ but also ${\rm C}^{1,\gamma}$ is already given by Proposition~\ref{step6}. The proof is then concluded.
\end{proof}

\section{Examples\label{s:examples}}

\subsection{Grushin plane\label{ssgr}}
An interesting example arising from sub-Riemannian geometry is the so-called Grushin plane, corresponding to $\R^2$ endowed with densities
\begin{equation}
h(x,\nu)=\sqrt{\nu_1^2+|x_1|^{2\alpha}\nu_2^2},\qquad g\equiv 1,\quad x=(x_1,x_2)\in\R^2,\ \nu\in\mathbb S^1,
\end{equation}
for $\alpha\geq 0$. In particular, for $\alpha=1$ this is a $2$-dimensional quotient of the Heisenberg group, setting of the celebrated Pansu's conjecture~\cite{P82,P82fr}.\par

In~\cite{MM}, the authors characterize isoperimetric sets in this framework. We also refer to~\cite{FM} for a multidimensional generalization of the isoperimetric problem, and to~\cite{F,FS} for a first approach to clustering problems in the Grushin plane. Note that in these references the Grushin perimeter is defined in a more general way via De Giorgi's definition allowing for non-Euclidean rectifiable sets. In this paper, we do not need to work at this level of generality since a suitable (non-smooth) change of coordinates (see~\cite[Proposition~2.3]{MM}) reduces the problem to the study of the densities
\begin{equation}\label{eq:grushin}
h\equiv1,
\qquad
g(x)=|(1+\alpha)x_1|^{-\frac{\alpha}{1+\alpha}},\quad x\in\mathbb{R}^2,
\end{equation}
for sets with locally finite Euclidean perimeter. 
Existence of minimal clusters for the densities in~\eqref{eq:grushin} is proved in the forthcoming paper~\cite{FPS}.

\begin{prop}\label{p:grushin}
Any minimal cluster $\mathcal E$ relative to the densities in~\eqref{eq:grushin} satisfies the Steiner property and the arcs of $\partial^*\mathcal E$ are ${\rm C}^{1,\gamma}$ with $\gamma=1/(2(\alpha+1))$.
\end{prop}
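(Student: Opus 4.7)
The plan is to verify the hypotheses of Theorem~\ref{main} for the densities in~\eqref{eq:grushin} and then read off the exponent. Since $h\equiv 1$, both the local $1/2$-Dini continuity and the local Hölder continuity of $h$ of every exponent in $(0,1]$ are trivial; in particular $h$ is locally $1$-Hölder, which will be relevant when extracting the sharp $\gamma$ in the $C^{1,\gamma}$ regularity statement.

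The first quantitative step is to pin down the growth exponent. By Fubini, for any $x=(x_1,x_2)\in\R^2$ and $r>0$,
\[
|B(x,r)| \leq \int_{x_2-r}^{x_2+r}\!\int_{x_1-r}^{x_1+r} |(1+\alpha) y_1|^{-\alpha/(1+\alpha)}\, dy_1\, dy_2,
\]
and the inner integral is bounded, uniformly in $x_1$, by a constant times $r^{1/(1+\alpha)}$ (the worst case being $x_1=0$, where the primitive $|y_1|^{1/(1+\alpha)}$ is evaluated at $\pm r$). This yields $|B(x,r)| \leq C\, r^{(2+\alpha)/(1+\alpha)}$, so the local $\eta$-growth condition holds with $\eta=(2+\alpha)/(1+\alpha)\in (1,2]$, and the value is sharp along the singular line.

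Next I would establish the $\varepsilon-\varepsilon^\beta$ property with $\beta=1$. Since $h$ is constant, this follows from the general principle recalled in the introduction: one performs a standard $\varepsilon$-modification of the cluster supported in a reservoir where $g$ is bounded above and below. Any compact region disjoint from the singular line $\{y_1=0\}$ and from the prescribed point $x$ works as a reservoir, since $g$ is continuous and strictly positive away from $\{y_1=0\}$; existence of such a reservoir uses only the existence result for $\E$ proved in~\cite{FPS}. This delivers $\beta=1$ uniformly in $x$, exactly as in~\cite{PS19}.

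Since $\eta\beta=(2+\alpha)/(1+\alpha)>1$, case~(i) of Theorem~\ref{main} applies and gives the Steiner property. The last clause of Theorem~\ref{main}, applied with the Lipschitz density $h$, then yields $C^{1,\gamma}$ regularity of the arcs with
\[
\gamma = \tfrac{1}{2}\,\min\{\eta\beta-1,\,1\} = \tfrac{1}{2}\cdot\tfrac{1}{1+\alpha} = \tfrac{1}{2(1+\alpha)},
\]
which is precisely the claimed exponent. The only mildly delicate point of the plan is the $\varepsilon-\varepsilon^\beta$ verification, where one must locate a reservoir away from $\{y_1=0\}$ for each prescribed $x$; the $\eta$-computation and the final extraction of $\gamma$ are then routine.
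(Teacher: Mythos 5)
Your overall route coincides with the paper's: verify the local $\eta$-growth condition with $\eta=(2+\alpha)/(1+\alpha)$, verify the $\eps-\eps^\beta$ property with $\beta=1$, and then apply Theorem~\ref{main}; your growth estimate and the final extraction of $\gamma=\frac 1{2(\alpha+1)}$ are correct and essentially identical to the paper's. The gap is in the $\eps-\eps^\beta$ step, which is the only non-routine part. As stated, ``any compact region disjoint from the singular line and from the prescribed point $x$ works as a reservoir'' is not true: a compact region that is also disjoint from $\partial^*\E$ cannot absorb or release volume at cost $C|\eps|$, since creating new boundary there costs of order $|\eps|^{1/2}$. The linear cost comes from perturbing \emph{existing} interfaces, so the reservoir must contain points of $\partial^* E_i\cap\partial^* E_j$; and because all $m$ volumes must be changed by prescribed, independent amounts, one needs for each chamber $i$ a chain of interface points $a_j\in\partial^* E_{i_j}\cap\partial^* E_{i_{j+1}}$ off the $x_2$-axis connecting $E_i$ to the exterior chamber $E_0$. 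This is exactly the Maggi-type argument (cf.~\cite[Theorem~2.9.14]{Mag}) that the paper runs: first for vectors $\eps$ with a single nonzero coordinate, using at most $m(m+1)/2$ such points, and then for general $\eps$. Your proposal does not address this multi-chamber structure at all, and the existence of suitable interface points off the axis is a structural fact about the cluster, not a consequence of the existence result in~\cite{FPS}.

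The second missing ingredient is uniformity in $x$. Definition~\ref{defge} requires $R_\beta$, $\Ceeb$ and $\bar\eps$ to be uniform over all $x\in\R^2$, whereas choosing ``a reservoir for each prescribed $x$'' lets the bounds on $g$ and the local geometry of $\partial^*\E$ at the chosen reservoir vary with $x$, so the constant could degenerate. The paper resolves this by fixing \emph{two} interface points $a,b$ off the axis in advance, setting $d=\min\{|a-b|,|a_1|,|b_1|\}>0$, $B_1=B(a,d/4)$, $B_2=B(b,d/4)$ and $R_\beta=d/8$: then every ball $B(x,R_\beta)$ is disjoint from at least one of $B_1,B_2$, and the modification is always carried out in that one, with constants fixed once and for all. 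With these two ingredients supplied, your argument becomes the paper's; without them the $\eps-\eps^\beta$ verification is incomplete.
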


\begin{proof}
We show that the $\eta$-growth condition holds with $\eta=(\alpha+2)/(\alpha+1)$ and that any $m$-cluster $\mathcal E$ satisfies the $\eps-\eps^\beta$ property with $\beta=1$. The conclusion follows then by Theorem~\ref{main} (note that $h$ is the Euclidean density, hence regular).

We begin with the $\eta$-growth condition. For $x\in\R^2$ and $r>0$, let us set $Q(x,r)=[x_1-r,x_1+r]\times[x_2-r,x_2+r]$. In the following $C_\alpha>0$ will be a constant only depending on $\alpha$. Since $t\mapsto |t|^{-\frac{\alpha}{1+\alpha}}$ is decreasing for $t\in\R^+$, then
\[
|Q(x,r)|\leq |Q(0,r)|
=4(1+\alpha)^{-\frac{\alpha}{1+\alpha}}\, r\int_0^rx_1^{-\frac{\alpha}{1+\alpha}}\,dx_1
=C_\alpha\,r^{\frac{\alpha+2}{\alpha+1}}\,.
\]
The $\eta$-growth condition then holds with $\Cgr=C_\alpha$ and $R_\eta=1$.

We now pass to the $\eps-\eps^\beta$ property for $m$-clusters, starting from the case $m=1$. Let $E$ be a set of locally finite perimeter and finite Lebesgue measure. We fix $a,b\in\partial^*E$ such that $d=\min\{|a-b|,|a_1|,|b_1|\}>0$ and we let $B_1=B(a,d/4)$ and $B_2=B(b,d/4)$. Note that on $B_1\cup B_2$ we have $1/K<g<K$ for a constant $K>0$ only depending on the choice of these two balls, and set $R_\beta=d/8$. By construction, for every $x\in\R^2$ the ball $B(x,R_\beta)$ can intersect at most one among $B_1$ and $B_2$, so to get~(\ref{propeeb}) we can apply the standard Euclidean result in a ball among $B_1$ and $B_2$ not intersecting $B(x,R_\beta)$. To pass from the case $m=1$ to the case $m>1$, we can argue similarly as in the proof of~\cite[Theorem~2.9.14]{Mag}. More precisely, for any $1\leq i\leq m$, we can easily find finitely many indices $i_0,\, i_1,\, i_2,\, \dots\, , \, i_k\in \{0,\, 1,\, \dots\,,\,m\}$ such that $i_0=i,\, i_k=0$, and for every $0\leq j < k$ there is a point $a_j \in \partial^* E_{i_j}\cap \partial^* E_{i_{j+1}}$ not lying on the $x_2$-axis. Since the necessary points to fix are at most $m(m+1)/2$, we can apply the Euclidean result finitely many times obtaining~(\ref{propeeb}) for the special case when the vector $\eps\in\R^m$ has a single non-zero coordinate. And from this we obviously conclude also for a generic vector.
\end{proof}

\begin{rmk}
The isoperimetric set for the densities in~\eqref{eq:grushin} has a $C^{1,\frac{1}{\alpha+1}}$ regular boundary, as follows by~\cite{MM}. In particular, the regularity established in Proposition~\ref{p:grushin} is not sharp, at least for $m=1$. 
Moreover, in~\cite{FPS} we prove that minimal clusters in this framework exist and are bounded so that they are made by a \emph{finite} union of $C^{1,\frac{1}{2(\alpha+1)}}$ regular arcs. 
\end{rmk}

\subsection{Gaussian plane\label{ssgauss}}

The Gaussian plane is $\mathbb{R}^2$ with densities
\[
h(x)=g(x)=\frac{1}{2\pi}\, e^{-\frac{|x|^2}{2}},\quad x\in\mathbb{R}^2\,.
\]
The isoperimetric problem with these densities, which is very important also for its connections with Probability, is deeply studied since the pioneering works~\cite{ST,B}. Recently, the characterization of optimal $m$-clusters in this framework has been given in~\cite{MN}, where the problem is solved in the more general $n$-dimensional Gaussian space, for $2\leq m\leq n+1$. A simple application of our main result is the following.

\begin{prop}
Any minimal cluster $\mathcal E$ relative to the Gaussian densities satisfies the Steiner property and the arcs of $\partial^*\mathcal E$ are $C^{\infty}$.
\end{prop}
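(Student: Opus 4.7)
The plan is to apply Theorem~\ref{main} to get the Steiner property together with some initial H\"older regularity, and then bootstrap the Euler--Lagrange equation to promote it to $C^\infty$.

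First I would verify the hypotheses of Theorem~\ref{main}. The density $g(x)=\frac{1}{2\pi}e^{-|x|^2/2}$ is bounded above by $\frac{1}{2\pi}$, so for every ball $B(x,r)\subseteq\R^2$ we have $|B(x,r)|\le\frac{r^2}{2}$. Hence the (global) $\eta$-growth condition holds with $\eta=2$ and $\Cgr=1/2$. Since $h$ coincides with $g$, it is $C^\infty$; in particular $h$ is locally Lipschitz and locally $\alpha$-H\"older for every $\alpha\in(0,1)$, and as recalled in the introduction this yields the $\eps$-$\eps^\beta$ property for every cluster of locally finite perimeter with $\beta=1$. Finally $h$ is smooth, hence locally $1/2$-Dini continuous. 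Thus $\eta\beta=2>1$, and Theorem~\ref{main} applies: $\mathcal E$ satisfies the Steiner property, and the arcs of $\partial^*\mathcal E$ are $C^{1,\gamma}$ with $\gamma=\frac{1}{2}\min\{1,\alpha\}$ for every $\alpha\in(0,1)$, i.e.\ they are $C^{1,\gamma}$ for every $\gamma<1/2$.

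The remaining step is to upgrade the interior regularity of each arc from $C^{1,\gamma}$ to $C^\infty$. Fix a $C^1$ arc of $\partial^*\mathcal E$ locally separating two of the regions $E_i,E_j$. As a critical point of the weighted perimeter $\int h\,d\H^1$ under the weighted volume constraints, the arc satisfies the standard Euler--Lagrange equation for isotropic weighted perimeter:
\begin{equation*}
h\,\kappa+\partial_{\nu}h=\lambda\,g\,,
\end{equation*}
where $\kappa$ is the Euclidean (signed) curvature, $\nu$ is a continuous unit normal, and $\lambda$ is a Lagrange multiplier associated to the pair $(E_i,E_j)$. Locally writing the arc as a graph $x_2=u(x_1)$ in suitable rotated coordinates (possible by the $C^1$ regularity), this becomes a quasilinear second-order ODE
\begin{equation*}
\frac{u''(x_1)}{\bigl(1+u'(x_1)^2\bigr)^{3/2}}=F\bigl(x_1,u(x_1),u'(x_1)\bigr),
\end{equation*}
where $F$ is a $C^\infty$ function of its arguments, built from $h$, $\nabla h$, $g$ and $\lambda$. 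Since $h$ is bounded away from zero on any bounded set, this ODE is uniformly elliptic. Starting from $u\in C^{1,\gamma}$, one-dimensional Schauder theory (or simply solving the ODE) gives $u''\in C^{0,\gamma}$, hence $u\in C^{2,\gamma}$; iterating, $u\in C^{k,\gamma}$ for every $k$, so $u\in C^\infty$.

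I do not foresee any serious obstacle: once Theorem~\ref{main} is invoked, the only work is the standard smooth bootstrap on the Euler--Lagrange equation, which is immediate here because the Gaussian density is real-analytic and strictly positive. (In fact the same argument gives real-analytic arcs, but $C^\infty$ suffices for the statement.)
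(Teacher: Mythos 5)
Your proposal is correct and follows essentially the same route as the paper: verify the $\eta$-growth condition with $\eta=2$ and the $\eps$-$\eps^\beta$ property with $\beta=1$ (the paper cites \cite[Theorem~A]{PS19} for the latter), apply Theorem~\ref{main}, and then upgrade to $C^\infty$ by the standard variational/bootstrap argument, which you merely spell out in more detail via the Euler--Lagrange equation $h\kappa+\partial_\nu h=\lambda g$. The only cosmetic difference is that since $h$ is locally Lipschitz you could take $\alpha=1$ directly and obtain $C^{1,1/2}$ arcs, as the paper does, rather than $C^{1,\gamma}$ for every $\gamma<1/2$; this does not affect the final conclusion.
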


\begin{proof}
We first apply Theorem~\ref{main} to prove that the Steiner property holds with $C^{1,\frac{1}{2}}$ regularity. Indeed, the $\eta$-growth condition is easily verified with $\eta=2$ and the $\eps-\eps^\beta$ property for clusters holds with $\beta=1$ thanks to~\cite[Theorem~A]{PS19}.

To conclude the proof it is enough to observe that $h\equiv g$ is a smooth function on $\R^2$ and then the $C^\infty$ regularity of the arcs follows by a standard variational argument. 
\end{proof}

\section*{Acknowledgments}
The authors acknowledge the support of the GNAMPA--INdAM projects \textit{Problemi isoperimetrici in spazi Euclidei e non} (n.\ prot.\ UUFMBAZ-2019-000473 11-03-2019) and \textit{Problemi isoperimetrici con anisotropie} (n.\ prot.\ U-UFMBAZ-2020-000798 15-04-2020).
The first author also acknowledges the support received from the European Union's Horizon 2020 research and innovation programme under the \textit{Marie Sk\l odowska-Curie grant No 794592} and from the ANR-15-CE40-0018 project \textit{SRGI - Sub-Riemannian Geometry and Interactions}.
The third author also acknowledges the support of the \textit{ERC Starting Grant 676675 FLIRT – Fluid Flows and Irregular Transport}, and he has also received funding from the European Research Council (ERC) under the European Union’s Horizon 2020 research and innovation program (grant agreement No.\ 945655). Most of the work of this paper was performed while the third author was at the Department Mathematik und Informatik of the University of Basel (Switzerland). The authors would like to thank F.~Morgan for useful comments on a first draft of this paper.

\end{document}